\documentclass[11pt]{article}
\usepackage{amsmath,amsthm,amssymb,mathtools}
\usepackage{mathrsfs,graphicx,color,latexsym,tikz,calc}
\usepackage{tikz-cd}
\usepackage{epstopdf}
\usepackage{enumerate}
\usepackage{caption}

\usepackage{xcolor}
\usepackage{hyperref}
\hypersetup{
    colorlinks=true,
    citecolor=blue,
    linkcolor=blue,
    urlcolor=blue
}

\usetikzlibrary{shadows}
\usetikzlibrary{patterns,arrows,decorations.pathreplacing}
\usepackage{ulem}

\newtheorem{theorem}{Theorem}[section]
\newtheorem{lemma}[theorem]{Lemma}
\newtheorem{problem}[theorem]{Problem}

\newtheorem{prop}[theorem]{Proposition}

\theoremstyle{definition}
\newtheorem{definition}[theorem]{Definition}
\newtheorem{remark}[theorem]{Remark}

\newcommand{\Fcal}{\mathcal F}

\newcommand{\Gcal}{\mathcal G}
\newcommand{\Hcal}{\mathcal H}
\newcommand{\Acal}{\mathcal A}
\newcommand{\Kcal}{\mathcal K}
\newcommand{\Lcal}{\mathcal L}
\newcommand{\Mcal}{\mathcal M}
\newcommand{\Scal}{\mathcal S}
\newcommand{\Tcal}{\mathcal T}

\allowdisplaybreaks[4]

\title{\bf \Large
A Spectral Hilton--Milner--Frankl Theorem for $t$-Intersecting Families}

\date{ }

\author{
{\small  Xucheng Bu, \  Lihua Feng, \ Lu Lu\footnote{Corresponding author.\newline{\hspace*{5mm} Email address:} \url{buxcmath@163.com} (X. Bu), \url{fenglh@163.com} (L. Feng), \url{lulumath@csu.edu.cn} (L. Lu), \url{lrr999a@163.com} (R. Lu)}, \ Rongrong Lu}\\[2mm]
\small School of Mathematics and Statistics, HNP-LAMA, Central South University\\
 \small Changsha, Hunan, 410083, China\\
}

\begin{document}
\maketitle

\begin{abstract}
Keevash, Lenz, and Mubayi proved a spectral Erd\H{o}s--Ko--Rado theorem,
showing that, for sufficiently large $n$, the complete $t$-star uniquely
maximizes the adjacency-tensor spectral radius among all $t$-intersecting
$k$-uniform families.

In this paper, we establish a spectral Hilton--Milner--Frankl theorem for
nontrivial $t$-intersecting families in the explicit range
$1\le t\le k-2$ and $n\ge 100\cdot 2^k k^7$. More precisely, we prove that, for every nontrivial $t$-intersecting $k$-uniform family $\mathcal F$, the spectral radius satisfies
\[
 \rho(\mathcal F)\le
 \max\{\rho(\mathcal H_{n,k,t}),\rho(\mathcal A_{n,k,t})\},
\]
where $\mathcal H_{n,k,t}$ and $\mathcal A_{n,k,t}$ are the two extremal
families appearing in the classical Hilton--Milner--Frankl theorem.
Moreover, equality holds only for the extremal candidates attaining the
maximum, up to isomorphism.

We further compare the two candidates asymptotically. For each fixed $t$,
the unique real solution $x=x_t$ of
\[
 (t+2)^{x-t-1}(t+1)^{t+1}=(x-t+1)^{x-1}
\]
determines, as $k$ varies, which of $\mathcal H_{n,k,t}$ and
$\mathcal A_{n,k,t}$ has the larger asymptotic spectral radius.
\end{abstract}

{\bf 2020 Mathematics Subject Classification}: 05C50, 05D05, 05C65, 15A69

{\bf Key words}: $t$-intersecting family; Hilton--Milner--Frankl theorem;
hypergraph spectral radius; adjacency tensor; extremal set theory

\section{Introduction}
\label{sec:intro}

Let $[n]=\{1,2,\ldots,n\}$. For an integer $k\ge 0$, we write
$\binom{[n]}{k}$ for the family of all $k$-element subsets of $[n]$.
A subfamily $\Fcal\subseteq\binom{[n]}{k}$ is called a
\textit{$k$-uniform hypergraph}, or simply a \textit{$k$-graph}; its members
are called \textit{edges}. A $k$-graph $\Fcal$ is \textit{$t$-intersecting}
if
\[
 |F\cap F'|\ge t \text{ for all } F,F'\in\Fcal.
\]
For $t=1$, we simply say that $\Fcal$ is \textit{intersecting}.

The classical Erd\H{o}s--Ko--Rado theorem determines the maximum size of an
intersecting family of $k$-subsets of $[n]$ in the sharp range $n\ge 2k$.
It states that every intersecting family
$\Fcal\subseteq \binom{[n]}{k}$ satisfies
\[
|\Fcal|\le \binom{n-1}{k-1}.
\]
Moreover, if $n>2k$, equality holds only for full stars.

Wilson extended the Erd\H{o}s--Ko--Rado theorem to $t$-intersecting families
in the large-$n$ range.

\begin{theorem}[Wilson \cite{Wilson}]
\label{thm:wilson}
Suppose that $\Fcal\subseteq\binom{[n]}k$ is $t$-intersecting and
$n>(t+1)(k-t+1)$. Then
\[
 |\Fcal|\le \binom{n-t}{k-t}.
\]
Moreover, equality holds if and only if $\Fcal$ is a
full $k$-uniform $t$-star,
that is, the family of all $k$-sets containing a fixed
$t$-set.
\end{theorem}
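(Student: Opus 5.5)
Since Theorem~\ref{thm:wilson} is cited from Wilson, we only sketch how one would prove it. The plan is Wilson's eigenvalue method, which handles the whole stated range $n>(t+1)(k-t+1)$ in one argument. Work in the Johnson scheme $J(n,k)$ on $\binom{[n]}{k}$. Its Bose--Mesner algebra has the eigenspaces $V_0\oplus V_1\oplus\cdots\oplus V_k$ of $\mathbb R^{\binom{[n]}k}$. Let $A_i$ be the distance-$i$ matrices: $(A_i)_{F,G}=1$ when $|F\cap G|=k-i$. A $t$-intersecting family is then an independent set in the graph with adjacency matrix $\sum_{i>k-t}A_i$, i.e.\ pairs meeting in fewer than $t$ points. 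The bound will come from the weighted Hoffman (Delsarte) ratio bound. We look for a symmetric matrix $M=\sum_{i>k-t}c_iA_i$ in this algebra, supported only on the forbidden pairs. Writing its eigenvalue on $V_j$ as $\theta_j$, with $\theta_0$ the eigenvalue on $V_0$ (constant vectors), we want
\[
 \theta_0-\min_j\theta_j\cdot\Big(1-\tfrac{\binom{n-t}{k-t}}{\binom nk}\Big)^{-1}\cdots
\]
More concretely, we want $\min_{j\ge1}\theta_j=\theta_{\min}$ with
\[
 \frac{-\theta_{\min}}{\theta_0-\theta_{\min}}=\frac{\binom{n-t}{k-t}}{\binom nk}.
\]
Then Hoffman's bound gives $|\Fcal|\le\binom{n-t}{k-t}$ at once.

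The key step is the construction of $M$. Wilson's choice is
\[
 M=\sum_{i=0}^{t-1}(-1)^{t-1-i}\binom{k-1-i}{k-t}\binom{n-k-t+i}{k-t}^{-1}\, C_i .
\]
Here $C_i=W_{ik}^{T}\overline{W}_{ik}$-type products are built from the inclusion matrices $W_{ik}$ between $i$-sets and $k$-sets, taken in the Kneser-type version that counts $i$-sets in $F$ disjoint from $G$. Each such product vanishes when $|F\cap G|\ge t$, as required after the right combination. Its eigenvalues on $V_j$ are explicit products of binomial coefficients, obtained from the known eigenvalues of $W_{ik}^TW_{ik}$ on $V_j$, namely $\binom{k-j}{i-j}\binom{n-i-j}{k-i}$. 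Thus $\theta_j$ is a closed-form alternating sum. The identities needed are $\theta_j=-1$ for $1\le j\le t$ (normalized), $\theta_0=\binom nk/\binom{n-t}{k-t}-1$, and $\theta_j$ possibly nonzero for $j>t$.

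The main obstacle is verifying $\theta_j\ge -1$ for all $j>t$, and this is exactly where the hypothesis $n>(t+1)(k-t+1)$ enters. We would handle it by writing $\theta_j$ as a hypergeometric-type sum and comparing consecutive terms; the ratio becomes small precisely when $n-2k+\ldots>(t+1)(k-t+1)-\ldots$. We would first check $j=t+1$, which is the binding case, and then show monotonic decay in $|\theta_j|$ for larger $j$. This is where essentially all the computation lives. For uniqueness, the strict inequality $\theta_j>-1$ for $j>t$ when $n>(t+1)(k-t+1)$ forces equality in Hoffman's bound to put the characteristic vector of $\Fcal$ in $V_0\oplus\cdots\oplus V_t$. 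Such vectors are linear combinations of the characteristic vectors of full $t$-stars. A 0/1 vector of size $\binom{n-t}{k-t}$ in this span that is also $t$-intersecting must be a single full $t$-star; this follows by a short combinatorial argument using the $t$-intersecting property on pairs of $t$-sets. As a fallback for the main statement when $n$ is large, one could instead run a Frankl-style kernel/shifting argument, but it would not reach the sharp threshold.
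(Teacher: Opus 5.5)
The paper gives no proof of this statement; it is quoted from Wilson. Your outline, a weighted Hoffman bound in the Johnson scheme with eigenvalue $-1$ on $V_1,\dots,V_t$, is indeed Wilson's route. However, two of the steps that carry the proof are wrong as stated, and a third is only asserted.

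\textbf{The matrices $C_i$.} Your $C_i$ do not have the required support. If $(C_i)_{F,G}$ counts $i$-subsets of $F$ disjoint from $G$, then $(C_i)_{F,G}=\binom{k-|F\cap G|}{i}$. So $C_0=J$, and every $C_i$ with $i\le t-1$ is nonzero on pairs with $|F\cap G|=t$. No choice of coefficients fixes this. The entries of $\sum_{i<t}c_iC_i$ form a polynomial of degree less than $t$ in $s=|F\cap G|$. For $k\ge 2t-1$ such a polynomial cannot vanish at the $k-t+1\ge t$ values $s=t,\dots,k$ without being identically zero.

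Wilson's matrices count $(k-i)$-subsets of $F$ disjoint from $G$, i.e.\ $C_i=W_{k-i,k}^{T}\overline{W}_{k-i,k}$ with $(C_i)_{F,G}=\binom{k-|F\cap G|}{k-i}$. These have the properties you need:
\begin{itemize}
\item each vanishes by itself, diagonal included, once $|F\cap G|>i$;
\item $C_0$ is the Kneser matrix;
\item $C_i$ has eigenvalue $(-1)^j\binom{k-j}{i}\binom{n-k+i-j}{k-j}$ on $V_j$.
\end{itemize}
With these $C_i$, your coefficients do give $\theta_0=\binom nk/\binom{n-t}{k-t}-1$ and $\theta_1=\dots=\theta_t=-1$.

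\textbf{Where the threshold enters.} The eigenvalue $\theta_{t+1}$ is positive, so there is nothing to check at $j=t+1$. The hypothesis $n\ge(t+1)(k-t+1)$ is exactly the condition $\theta_{t+2}\ge -1$. At the threshold $\theta_{t+2}=-1$, consistent with the Frankl family $\Acal_{n,k,t}$ also having size $\binom{n-t}{k-t}$ there. Two examples:
\begin{itemize}
\item For $t=1$, $\theta_j=(-1)^j\binom{n-k-j}{k-j}/\binom{n-k-1}{k-1}$. Then $\theta_2>0$, and $\theta_3\ge -1$ iff $(n-k-1)(n-k-2)\ge (k-1)(k-2)$, i.e.\ $n\ge 2k$.
\item For $t=2$ and $n=3(k-1)$, one computes $\theta_3=\frac{5k-11}{2(2k-5)}>1$ and $\theta_4=-1$.
\end{itemize}
So your plan of checking $j=t+1$ and then using decay of $|\theta_j|$ would fail: $|\theta_{t+1}|$ can exceed $1$, and the decisive inequality at $j=t+2$ is never examined. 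What you must show is:
\begin{itemize}
\item $\theta_{t+1}>0$;
\item no $\theta_j$ with $j>t$ lies below $\theta_{t+2}$;
\item $\theta_{t+2}\ge -1$ for $n\ge (t+1)(k-t+1)$, with strict inequality when $n>(t+1)(k-t+1)$.
\end{itemize}

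\textbf{Uniqueness.} This step is asserted rather than proved. Write $\chi_\Fcal(F)=\sum_{T\subseteq F,\,|T|=t}y_T$. Deducing from this that a $t$-intersecting $\Fcal$ of size $\binom{n-t}{k-t}$ is a single full $t$-star is manageable for $t=1$. For general $t$ it is a genuine classification step, not a short combinatorial remark.
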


The full range of parameters was later determined by the complete
intersection theorem of Ahlswede and Khachatrian
\cite{AhlswedeKhachatrianComplete}.

For any fixed $t$-set $T\subseteq [n]$, the family
\[
\Scal^k_{n,t}(T)=\left\{F\in \binom{[n]}k:T\subseteq F\right\}
\]
is called a \textit{full $k$-uniform $t$-star}. We shall write $\Scal^k_{n,t}$ for an
arbitrary full $k$-uniform $t$-star. A $t$-intersecting family is called \textit{trivial}
if it is contained in some full $k$-uniform $t$-star, and \textit{nontrivial} otherwise.
This leads naturally to the following extremal problem.

\begin{problem}
\label{prob:cardinal-nontrivial}
Determine the maximum size of a nontrivial $t$-intersecting family
$\Fcal\subseteq\binom{[n]}k$, and characterize the extremal families.
\end{problem}

The complete nontrivial-intersection theorem of Ahlswede and Khachatrian
\cite{AhlswedeKhachatrian} gives a full solution to
Problem~\ref{prob:cardinal-nontrivial}: it determines, for every choice of
parameters, the largest size of a nontrivial $t$-intersecting family and
characterizes all extremal families. In the Wilson range
\[
 n>(t+1)(k-t+1),
\]
the extremal families relevant for our purposes are the following two
standard constructions.

\begin{definition}
\label{def:candidate-families}
Define
\begin{align}
 \Hcal_{n,k,t}
 =&
 \left\{H\in\binom{[n]}{k}:[t]\subseteq H,\;
 H\cap[t+1,k+1]\neq\emptyset\right\}
 \cup
 \left\{[k+1]\setminus\{j\}:1\le j\le t\right\},
 \label{eq:H-def}\\
 \Acal_{n,k,t}
 =&
 \left\{A\in\binom{[n]}{k}:|A\cap[t+2]|\ge t+1\right\}.
 \label{eq:A-def}
\end{align}
Both $\Hcal_{n,k,t}$ and $\Acal_{n,k,t}$ are nontrivial
$t$-intersecting families. We refer to $\Hcal_{n,k,t}$ as the Hilton--Milner family and
to $\Acal_{n,k,t}$ as the Frankl family. These two families will be the main extremal
candidates in the spectral problem considered below.
\end{definition}

For $t=1$, Problem~\ref{prob:cardinal-nontrivial} is solved by the classical
Hilton--Milner theorem \cite{HiltonMilner}. For general $t$, important
precursor results were obtained by Frankl \cite{Frankl1978}. With the two
families $\Hcal_{n,k,t}$ and $\Acal_{n,k,t}$ defined above, the relevant
form of the complete nontrivial-intersection theorem of Ahlswede and
Khachatrian \cite{AhlswedeKhachatrian} in the Wilson range can be stated as
follows.

\begin{theorem}[Ahlswede--Khachatrian \cite{AhlswedeKhachatrian}]
\label{thm:AK-Wilson-range}
Suppose that $n>(t+1)(k-t+1)$ and that
$\Fcal\subseteq\binom{[n]}k$ is a nontrivial $t$-intersecting family. Then
\[
|\Fcal|\le
\max\{|\Acal_{n,k,t}|,|\Hcal_{n,k,t}|\}.
\]
Moreover, if equality holds, then $\Fcal$ is isomorphic to one of the
families among $\Acal_{n,k,t}$ and $\Hcal_{n,k,t}$ whose size attains this
maximum.
\end{theorem}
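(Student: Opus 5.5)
The plan is to reduce to the case of a \emph{shifted} (left-compressed) family and then count directly. The tool is the $(i,j)$-shift $S_{ij}$ for $i<j$, which replaces $j$ by $i$ in a member whenever the resulting set is not already present. It is classical that $S_{ij}$ preserves $|\Fcal|$ and the $t$-intersecting property. The difficulty is that it need not preserve nontriviality.

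I would follow Frankl's device, taking $\Fcal$ nontrivial $t$-intersecting of maximum size and applying shifts one at a time.
\begin{enumerate}
\item[(a)] If every shift keeps the family nontrivial, we end with a shifted nontrivial $t$-intersecting family $\Fcal'$ with $|\Fcal'|=|\Fcal|$.
\item[(b)] If some shift $S_{ij}$ makes the current family $\Gcal$ trivial, say $S_{ij}(\Gcal)\subseteq\Scal^k_{n,t}(T)$, then $i\in T$. Moreover, every member of $\Gcal$ contains $T\setminus\{i\}$ together with $i$ or $j$. Since $\Gcal$ is nontrivial, it contains a set $G_0\supseteq (T\setminus\{i\})\cup\{j\}$ with $i\notin G_0$. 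Every member containing $T$ must then meet $G_0\setminus T$ in at least one element besides the $t-1$ common ones. This confines $\Gcal$ to a Hilton--Milner-type configuration with $T'=(T\setminus\{i\})\cup\{i,j\}$, a $(t+1)$-set. A direct count gives $|\Gcal|\le\max\{|\Hcal_{n,k,t}|,|\Acal_{n,k,t}|\}$ in the Wilson range, with equality only for the two candidates.
\end{enumerate}
Case (b) is a finite computation with binomial coefficients. It amounts to maximizing over the number of ``exceptional'' sets avoiding one element of $T$.

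For the shifted case (a), I would use Frankl's structural lemma. If $\Fcal'$ is shifted and $t$-intersecting, then every $F\in\Fcal'$ satisfies $|F\cap[t+2r]|\ge t+r$ for some $r\ge0$. The argument is the usual one: otherwise shift $F$ to a reflected copy that meets $F$ in fewer than $t$ elements.

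Nontriviality of a shifted family forces $[t]\not\subseteq F_1$ for some $F_1\in\Fcal'$. By shiftedness we may take $F_1\supseteq[t+2]\setminus\{t\}$, or more generally $F_1=[k+1]\setminus\{t\}$. I would then split on the smallest $r\ge1$ for which a member $F$ exists with $[t]\not\subseteq F$ and $|F\cap[t+2r]|=t+r$.
\begin{enumerate}
\item[(i)] If $r=1$, every member must meet $[t+2]$ in at least $t+1$ elements, apart from a negligible remainder. This gives $\Fcal'\subseteq\Acal_{n,k,t}$ up to a count of lower order.
\item[(ii)] If $r$ is large, the non-star members are confined to sets of the form $[k+1]\setminus\{x\}$. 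This gives a family contained in a copy of $\Hcal_{n,k,t}$.
\item[(iii)] The intermediate values $2\le r\le k-t$ give the Frankl families $\{F:|F\cap[t+2r]|\ge t+r\}$. Their sizes I would bound by $\binom{t+2r}{t+r}\binom{n-t-2r}{k-t-r}+\dots$.
\end{enumerate}
In each case we would then verify that the sizes are dominated by those of the two candidates when $n>(t+1)(k-t+1)$.

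The main obstacle is the comparison in (iii) in the sharp Wilson range, as opposed to $n$ merely large. There the sizes of the intermediate Frankl-type families are not obviously smaller, and naive estimates lose the constant $(t+1)$. For this step I would try the Ahlswede--Khachatrian \emph{generating set} and pushing--pulling method. One considers a minimal generating set $g(\Fcal')$ and shows that, unless $\Fcal'$ is $\Acal_{n,k,t}$ or $\Hcal_{n,k,t}$, the largest element in the generators can be lowered or raised so as to strictly increase $|\Fcal'|$. This uses the inequality $n>(t+1)(k-t+1)$ exactly as in Wilson's theorem (Theorem~\ref{thm:wilson}).

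Uniqueness follows from tracking strict inequalities, together with the observation that the shifts used in reaching (a) can be undone without changing the isomorphism type of an extremal family.
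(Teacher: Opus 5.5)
The paper gives no proof of this statement. It is quoted from Ahlswede--Khachatrian as background and is not used in the proof of Theorem~\ref{thm:main}, so your outline has to stand on its own. It does not: the shifted case (a) is not handled.

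\textbf{The case split in (a) is degenerate.} You note yourself that $[k+1]\setminus\{t\}\in\Fcal'$ for every shifted nontrivial family. This set omits $t$ and meets $[t+2]$ in exactly $t+1$ points, so your ``smallest $r$'' is always $r=1$, and (ii) and (iii) never occur. In particular $\Hcal_{n,k,t}$, which is itself shifted, falls under (i). Yet for $k\ge t+2$ its $\binom{n-k-1}{k-t-1}$ members $[t]\cup\{k+1\}\cup D$ with $D\cap[k+1]=\emptyset$ lie outside $\Acal_{n,k,t}$, and they are not of lower order. In any case, ``negligible remainder'' and ``lower order'' have no force in the sharp range $n>(t+1)(k-t+1)$, where an exact inequality and uniqueness are required.

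\textbf{Frankl's lemma does not place $\Fcal'$ in a single Frankl family.} The lemma puts each member in $\mathcal F_r=\{F:|F\cap[t+2r]|\ge t+r\}$ for \emph{some} $r$ depending on $F$. So it only gives $\Fcal'\subseteq\bigcup_r\mathcal F_r$, and this union contains the full star $\mathcal F_0$. Bounding the sizes $|\mathcal F_r|$ as in (iii) therefore says nothing about $|\Fcal'|$. Comparing the $|\mathcal F_r|$ is the easy part: in the Wilson range they strictly decrease in $r$, and $\mathcal F_1=\Acal_{n,k,t}$. The real difficulty is a family that mixes star-like members with members drawn from several $\mathcal F_r$. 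That is exactly what you defer to ``the generating set and pushing--pulling method'', that is, to the theorem itself. You do not say how pushing or pulling can increase the size while keeping the family both $t$-intersecting and nontrivial.

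\textbf{Two further steps need more than you indicate.} First, case (b) is not a finite binomial computation. Split $\Gcal$ into three classes: members containing $T\cup\{j\}$, members containing $T$ but not $j$, and members containing $(T\setminus\{i\})\cup\{j\}$ but not $i$. After deleting $T\cup\{j\}$, the last two classes become cross-intersecting families of $(k-t)$-subsets of an $(n-t-1)$-set. Both are nonempty because $\Gcal$ is nontrivial. You then need the Hilton--Milner inequality for cross-intersecting families, $|\mathcal A|+|\mathcal B|\le\binom ma-\binom{m-a}{a}+1$ for $m\ge 2a$. This gives $|\Gcal|\le\binom{n-t}{k-t}-\binom{n-k-1}{k-t}+1\le|\Hcal_{n,k,t}|$, with equality possible only when $t=1$.

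Second, uniqueness does not follow by ``undoing'' shifts, because the map $\Gcal\mapsto S_{ij}(\Gcal)$ is not injective. You need a separate lemma: if $\Gcal$ is nontrivial $t$-intersecting and $S_{ij}(\Gcal)$ is isomorphic to $\Acal_{n,k,t}$ or $\Hcal_{n,k,t}$, then $\Gcal$ is isomorphic to the same family.
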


Thus, in the Wilson range, $\Hcal_{n,k,t}$ and $\Acal_{n,k,t}$ are precisely
the cardinal extremal candidates. The purpose of this paper is to prove the
corresponding spectral extremal result.

The fact that two different families, $\Hcal_{n,k,t}$ and $\Acal_{n,k,t}$,
arise as extremal candidates illustrates the additional complexity of the
nontrivial problem. After the full $k$-uniform $t$-stars are excluded, the extremal
structure may depend on the range of the parameters, rather than being given
by a single canonical construction
\cite{AhlswedeKhachatrian,AhlswedeKhachatrianComplete}. Related
Hilton--Milner--Frankl-type problems have been studied in product and
cross-intersection settings; see, for example,
\cite{FranklCross1992,FranklTokushige1992,Borg2008,frankl2024product}.
For further background on intersection problems, we refer the reader to the
monograph of Frankl and Tokushige \cite{FranklTokushige}.

We now turn from cardinal extremal problems to their spectral analogues.
Instead of maximizing the number of edges, a spectral extremal problem asks
for the largest possible value of a spectral parameter associated with the
hypergraph. For a $k$-uniform hypergraph $G\subseteq\binom{[n]}k$, its
adjacency-tensor spectral radius is defined by
\begin{equation}\label{eq:rho-def}
 \rho(G)=k\max\left\{
 \sum_{e\in G}\prod_{i\in e}x_i:
 x_i\ge 0,\ \sum_{i=1}^n x_i^k=1
 \right\}.
\end{equation}

For uniform hypergraphs, the adjacency-tensor framework was introduced by
Cooper and Dutle \cite{CooperDutle}. The variational formulation above is
closely related to the tensor eigenvalue theory of Qi \cite{Qi} and to
Perron--Frobenius theory for nonnegative multilinear forms
\cite{FriedlandGaubertHan}. Keevash, Lenz, and Mubayi introduced the more
general $\alpha$-spectral radius and developed a method for transferring
suitable combinatorial extremal theorems to spectral extremal results
\cite{KLM}. Among other applications, they obtained the following spectral version of
the Erd\H{o}s--Ko--Rado theorem for $t$-intersecting uniform hypergraphs.

\begin{theorem}[Keevash--Lenz--Mubayi \cite{KLM}]
\label{thm:spectral-EKR}
For every $k\ge 2$ and $t\ge 1$, there exists $n_0=n_0(k,t)$ such that, for
all $n\ge n_0$, every $t$-intersecting $k$-uniform family
$\Fcal\subseteq\binom{[n]}k$ satisfies
\[
 \rho(\Fcal)\le \rho(\Scal_{n,t}^k).
\]
Moreover, equality holds if and only if $\Fcal$ is a full $k$-uniform $t$-star.
\end{theorem}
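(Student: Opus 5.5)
The statement to be proved is Theorem~\ref{thm:spectral-EKR}: for $n$ large, the full $t$-star uniquely maximizes $\rho$ among $t$-intersecting families. The plan is to transfer Wilson's cardinal bound (Theorem~\ref{thm:wilson}) to the spectral setting, using two properties of an optimal weight vector: it is nearly uniform, and it reacts sharply to structural defects of $\Fcal$. We may assume $\Fcal$ is a maximal $t$-intersecting family, since adding edges only increases $\rho$. Fix a nonnegative optimal vector $x$ with $\sum x_i^k=1$ achieving $\rho(\Fcal)$ in \eqref{eq:rho-def}.

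\textbf{Eigen-equations and a lower bound.} The Lagrange conditions give, for every $i$ with $x_i>0$,
\[
\rho\,x_i^{k-1}=\sum_{e\in\Fcal,\ e\ni i}\ \prod_{j\in e\setminus\{i\}}x_j .
\]
The uniform vector gives
\[
\rho(\Fcal)\ge k|\Fcal|/n\quad\text{and}\quad \rho(\Scal^k_{n,t})\ge k\binom{n-t}{k-t}/n .
\]
So if $\rho(\Fcal)\ge\rho(\Scal^k_{n,t})$, then $\rho(\Fcal)=\Omega(n^{k-t-1})$.

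\textbf{Structure of $\Fcal$.} Suppose $\Fcal$ is not contained in any $t$-star. Then by the Hilton--Milner-type bound (Theorem~\ref{thm:AK-Wilson-range}, or a direct kernel argument), $|\Fcal|=O(n^{k-t-1})$, smaller than the star by a factor of order $n$. I would also use the standard fact that a $t$-intersecting family has a bounded covering structure. Every edge meets a fixed edge $F_0$ in at least $t$ points. Hence $\sum_{e}\prod x_j$ is bounded by a sum over $t$-subsets $T\subseteq F_0$ of $\bigl(\prod_{T}x\bigr)\cdot(\text{link sums})$.

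\textbf{Lipschitz-type control of $x$.} Apply the power-mean inequality to the link of each $t$-set $T$; the link lives on $k-t$ vertices. This gives
\[
\sum_{e\supseteq T}\prod_{j\in e\setminus T}x_j\le \binom{n}{k-t}^{(t)/k}\Bigl(\textstyle\sum x_j^{k}\Bigr)^{(k-t)/k}\Big/\ldots,
\]
roughly $O(n^{k-t}\cdot n^{-(k-t)/k})$.

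\textbf{Comparing to the star.} Non-triviality means there are $t$-sets $T\ne T'$ both heavily used. Each edge containing $T$ must then meet a witness edge avoiding $T$ in an extra vertex, which costs a factor $n^{-1/k}\cdot k$ compared with the free link. Carrying this through, the multilinear form of a nontrivial $\Fcal$ is at most $(1-c)$ times that of the star, with $c>0$, for $n\ge n_0(k,t)$.

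\textbf{Equality case.} If $\Fcal$ is trivial, then $\Fcal\subseteq\Scal^k_{n,t}(T)$. By the Perron--Frobenius theory for nonnegative multilinear forms \cite{FriedlandGaubertHan}, deleting any edge from the full star strictly lowers $\rho$, since the star is connected and its Perron vector is positive. So equality forces $\Fcal$ to be the full star.

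\textbf{Main obstacle.} The hard part is the comparison step: making precise that one non-star edge forces a loss proportional to the whole form rather than just $O(n^{k-t-1})$ edges. Near-uniformity of $x$ is exactly what the eigen-equations alone do not give. As a first attempt, I would bound $\max x_i/\min_{x_i>0}x_i$ by taking the ratio of eigen-equations at two vertices and using $|\Fcal|\ge \Omega(n^{k-t})$ near the extremum. Alternatively, I would follow the Keevash--Lenz--Mubayi stability transfer \cite{KLM}. That argument shows that a family with near-maximal $\rho$ has near-maximal size, and then invokes stability for Wilson's theorem: a large $t$-intersecting family must be trivial.
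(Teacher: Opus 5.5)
There is a genuine gap, and it is exactly the step you label the main obstacle: you never bound $\rho(\Fcal)$ for a nontrivial $\Fcal$. The ``Lipschitz-type'' display is not a usable inequality. The per-vertex cost $k\,n^{-1/k}$ is asserted rather than derived. The claim that the form of a nontrivial $\Fcal$ is at most $(1-c)$ times that of the star is simply stated.

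Neither of your suggested repairs works as described. Near-uniformity of the optimizer is false already for the extremal family. By the computation in Lemma~\ref{lem:star-formula}, the optimal vector of $\Scal^k_{n,t}$ has $x_i^k=1/k$ on each center vertex and $x_i^k=(k-t)/(k(n-t))$ on each leaf. So the ratio of coordinates is of order $n^{1/k}$, and no $n$-independent bound on $\max x_i/\min_{x_i>0}x_i$ can hold. Assuming $|\Fcal|=\Omega(n^{k-t})$ ``near the extremum'' presupposes exactly the link between $\rho$ and $|\Fcal|$ that is missing. The picture of a star with one defective edge is also off. A nontrivial $t$-intersecting family is not a perturbed star; as you note yourself, it is smaller than the star by a factor of order $n$.

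The missing ingredient is that $\lambda=(k-1)!\rho$ is controlled by the edge count. With it, the size bound you already quote finishes the proof at once:
\begin{itemize}
\item Lemma~\ref{lem:edge-bound} gives $\lambda(\Fcal)\le(k!|\Fcal|)^\theta=O(n^{(k-t-1)\theta})$ for nontrivial $\Fcal$.
\item Lemma~\ref{lem:star-formula} with $c=t$ gives $\lambda(\Scal^k_{n,t})=\Theta(n^{(k-t)\theta})$.
\end{itemize}
So the loss is a factor of order $n^{-\theta}$, not a constant, and strict inequality holds for $n\ge n_0(k,t)$. Use this exact formula rather than your uniform-vector bound $k\binom{n-t}{k-t}/n$, which is too weak when $k=t+1$.

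You do not even need Theorem~\ref{thm:AK-Wilson-range}. Take the opening of the proof of Proposition~\ref{prop:cover-kernel}(i) and stop after the choice of $x$. That part uses only that no $t$-set is a $t$-cover, and it shows a nontrivial $\Fcal$ is covered by at most $\binom kt k$ full $(t+1)$-stars. Then Lemma~\ref{lem:basic} and Lemma~\ref{lem:star-formula} with $c=t+1$ give $\lambda(\Fcal)\le\binom kt k\,\lambda(\Scal^k_{n,t+1})=O(n^{(k-t-1)\theta})$.

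Your treatment of trivial families is fine: monotonicity, then strict monotonicity because the two-section of the full star is connected, as in Lemma~\ref{lem:strict-monotonicity}. The paper itself quotes this theorem from \cite{KLM} without proof, but these lemmas are all that is needed.
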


This theorem is a spectral analogue of the Erd\H{o}s--Ko--Rado theorem for
large $n$. Our goal is to establish the corresponding spectral extremal
result for nontrivial $t$-intersecting families.

Several related spectral extensions of intersection theorems for uniform
hypergraphs have also been studied. Zhang and Zhang \cite{ZhangZhang}
obtained sharp results for intersecting uniform hypergraphs with respect to
the adjacency $\mathcal A_\alpha$-tensor and the incidence
$\mathcal Q$-tensor. More broadly, spectral extremal problems for uniform
hypergraphs have recently been investigated from the perspectives of
stability and the $p$-spectral radius; see, for example, Liu, Ni, Wang, and
Kang \cite{LiuNiWangKang}.

We formulate the corresponding nontrivial spectral extremal problem as
follows.

\begin{problem}
\label{prob:spectral-nontrivial}
Determine the largest possible adjacency-tensor spectral radius of a
nontrivial $t$-intersecting $k$-uniform family, and characterize the
families attaining it.
\end{problem}

Recently, Fang, Gao and Chang \cite{FGC} partially solved Problem~\ref{prob:spectral-nontrivial} for the special case $k=3$ and $t=1$ under the assumption that the ground set $[n]$ is large enough. The present paper completely resolves Problem~\ref{prob:spectral-nontrivial} in the
parameter range stated below. We show that, once full $k$-uniform $t$-stars are
excluded, the maximum spectral radius is attained by one of the two explicit
families introduced in Definition~\ref{def:candidate-families}; these are
precisely the two constructions appearing in the Wilson-range form of the
Ahlswede--Khachatrian theorem, Theorem~\ref{thm:AK-Wilson-range}. We also
determine the asymptotic comparison between their spectral radii.

\begin{theorem}
\label{thm:main}
Let $\Fcal\subseteq\binom{[n]}k$ be a nontrivial $t$-intersecting $k$-graph.
Suppose that
\[
1\le t\le k-2 \text{ and }
n\ge100\cdot 2^k k^7.
\]
Then
\begin{equation}
\label{eq:main-bound}
 \rho(\Fcal)\le
 \max\{\rho(\Acal_{n,k,t}),\rho(\Hcal_{n,k,t})\}.
\end{equation}
Moreover, equality holds if and only if $\Fcal$ is isomorphic to a
candidate family whose spectral radius attains the maximum on the
right-hand side. Equivalently,
\begin{itemize}
\item[\rm(i)] if $\rho(\Acal_{n,k,t})\neq\rho(\Hcal_{n,k,t})$, then equality
is attained only by the candidate with the larger spectral radius;
\item[\rm(ii)] if $\rho(\Acal_{n,k,t})=\rho(\Hcal_{n,k,t})$, then equality is
attained by both candidates.
\end{itemize}

Furthermore, for each fixed $t$, the asymptotic comparison of the two
candidates as $n\to\infty$ is as follows. The family $\Acal_{n,k,t}$ has
larger asymptotic spectral radius for
\[
 t+2\le k\le \lfloor\kappa_t\rfloor,
\]
whereas $\Hcal_{n,k,t}$ has larger asymptotic spectral radius for
\[
 k\ge \lceil\kappa_t\rceil.
\]
Here $\kappa_t\notin\mathbb Z$ is the unique real solution of
\begin{equation}
\label{eq:transition-equation}
 (t+2)^{x-t-1}(t+1)^{t+1}=(x-t+1)^{x-1},
 \qquad x>t+1.
\end{equation}
\end{theorem}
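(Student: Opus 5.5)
Our plan is to transfer the cardinal result (Theorem~\ref{thm:AK-Wilson-range}) to the spectral setting through a detailed analysis of an optimal eigenvector.

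\textbf{Step 1: the extremal vector.} Fix a nontrivial $t$-intersecting $\Fcal$ maximizing $\rho$ among such families, and let $x$ be a nonnegative maximizer in \eqref{eq:rho-def}. The Lagrange conditions give
\[
\rho x_i^{k-1}=\sum_{e\ni i}\prod_{j\in e\setminus\{i\}}x_j .
\]
First we obtain a lower bound $\rho(\Fcal)\ge\rho(\Hcal_{n,k,t})\ge c\,\rho(\Scal^k_{n,t})$, with $\rho(\Scal^k_{n,t})$ of order $n^{(k-t)(k-1)/k}$ up to constants depending on $k,t$; this is computed from the symmetric test vector. We then show that $x$ is concentrated. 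Since $\Fcal$ is $t$-intersecting, its $t$-covering number is at most $k$, so by the usual kernel/sunflower argument all but a bounded number of edges pass through a bounded set $K$ of \emph{heavy} vertices. The remaining $n-O_k(1)$ light vertices carry weights that are equal up to a factor $1+O(k^{O(1)}/n)$. Shifting and symmetrization (the $S_{ij}$-shift does not decrease the Lagrangian and preserves the $t$-intersection property) let us assume that $\Fcal$ is shifted apart from the nontriviality constraint, so $x$ is monotone along $[n]$.

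\textbf{Step 2: structure reduction.} Following Frankl's argument, a shifted nontrivial $t$-intersecting family either contains many sets meeting $[t+2]$ in at least $t+1$ elements (and is then contained in a family of $\Acal$-type), or has a $t$-kernel $T$ together with a few sets avoiding $T$ (and is then contained in an $\Hcal$-type family). In the latter case the weighted count
\[
\sum_{e\in\Fcal}\prod_{i\in e}x_i
\]
is bounded by replacing each configuration with the one in which the non-kernel edges are forced to meet a small set. The explicit bound $n\ge 100\cdot 2^kk^7$ is used precisely here. The total weight lost by edges missing the forced set is at least of order $n^{-1/k}\cdot\rho$, while the possible gain from choosing a different configuration is of order $2^k k^{O(1)}/n$ relative. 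This yields $\rho(\Fcal)\le\max\{\rho(\Acal_{n,k,t}),\rho(\Hcal_{n,k,t})\}$, and the strict losses give the equality characterization, after undoing the shifts by the standard argument that a shift-equivalent extremal family is isomorphic to the candidate.

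\textbf{Step 3: asymptotics.} Using symmetric eigenvectors, we compute the leading-order terms. For $\Acal$, the leading families are the $t+2$ stars of $(t+1)$-subsets of $[t+2]$. Optimizing weights $a$ on $[t+2]$ and $b$ on the rest gives
\[
\rho(\Acal)\sim C_k (t+2)^{\alpha}\, n^{(k-t-1)(k-1)/k}\cdot(\text{function of }t+1).
\]
For $\Hcal$, the dominant part is a $t$-star whose $k-t$ free vertices must meet $[t+1,k+1]$. The missing mass changes the leading term by a factor involving $(k-t+1)$. Equating the two leading constants and taking logarithms gives \eqref{eq:transition-equation}. Uniqueness of the root follows from strict convexity/monotonicity of the difference of logarithms in $x$. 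Non-integrality follows by a parity or divisibility argument: at integer $x$ both sides are integers with incompatible prime factors $t+2$, $t+1$ versus $x-t+1$, and equality would force $x-t+1$ to be built from coprime $t+1$ and $t+2$, which is impossible.

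\textbf{Main obstacle.} The hard step is Step 2. Turning the cardinal dichotomy into a \emph{weighted} one with explicit error terms requires sharp two-sided control on the eigenvector entries of heavy versus light vertices. The $2^kk^7$ threshold should come from bounding the number of kernel configurations by $2^k$ times polynomial losses.
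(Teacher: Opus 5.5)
Your proposal has a genuine gap: Step~2, which has to carry the whole inequality, is not an argument, and the scales behind it are wrong.

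\textbf{The order of magnitude is off.} In Step~1, $\rho(\Hcal_{n,k,t})$ is \emph{not} within a constant factor of $\rho(\Scal^k_{n,t})$. Both candidates have spectral radius of order $n^{(k-t-1)\theta}$ with $\theta=(k-1)/k$, a factor of order $n^{\theta}$ below the full $t$-star.

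\textbf{Shifting fails for exactly this reason.} $S_{ij}$ preserves $t$-intersection but can make the family trivial. A trivial family may have radius of the larger order $n^{(k-t)\theta}$, so ``shifted apart from the nontriviality constraint'' yields no upper bound at all. You would need a nontriviality-preserving shifting scheme. In the spectral setting the admissible shift direction is also dictated by the current Perron vector, which changes after every shift.

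\textbf{The dichotomy you attribute to Frankl is false.} The family $\{F:|F\cap[t+4]|\ge t+2\}$ is shifted, nontrivial and $t$-intersecting with $\tau_t=t+2$. So it lies in no $\Acal$- or $\Hcal$-type family, since each of those has $t$-covers of size $t+1$. The same holds, for $k\ge t+3$, for the smaller Hilton--Milner-type family $\{F\supseteq[t]:F\cap\{t+1,t+2,t+3\}\neq\emptyset\}\cup\{F:|F\cap[t]|=t-1,\ \{t+1,t+2,t+3\}\subseteq F\}$. That family has the same order of magnitude as the candidates, so it can only be beaten by comparing leading coefficients. The error scales you quote ($n^{-1/k}\rho$ versus $2^kk^{O(1)}/n$) do not match what actually competes. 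Your claim that light eigenvector entries are nearly equal is unproved, and it is not needed.

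\textbf{What is missing is a quantitative kernel classification, which is how the paper proceeds.} Pass to a maximal family $\Gcal$.
\begin{itemize}
\item If $\tau_t(\Gcal)\ge t+2$, a branching argument covers $\Gcal$ by at most $\binom kt k^2$ full $(t+2)$-stars. Each has radius $O(n^{(k-t-2)\theta})$, which is negligible.
\item If $\tau_t(\Gcal)=t+1$, the $(t+1)$-sets that are $t$-covers form a $t$-intersecting $(t+1)$-graph $\Kcal$. It is contained either in $\binom{Z}{t+1}$ with $|Z|=t+2$, or in $\{X\cup\{y\}:y\in Y\}$ with $|X|=t$, $|Y|=k-t+1$. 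By maximality $\Gcal$ contains every $k$-set through a member of $\Kcal$, and everything outside the pure product layer of $\Kcal$ is covered by $O(k^3)$ full $(t+2)$-stars.
\end{itemize}
An exact product-layer formula makes the coefficient of $n^{(k-t-1)\theta}$ proportional to the $k$-norm spectral radius of $\Kcal$. Hence a proper kernel loses at least a $k^{-2}$ fraction of the leading term, while the stars contribute only a relative $O(2^kk^3n^{-\theta})$. This is where $n\ge100\cdot2^kk^7$ enters. The same computation supplies the constants your Step~3 leaves implicit: the leading coefficients $A_0$ of $\Acal_{n,k,t}$ and $H_0$ of $\Hcal_{n,k,t}$ satisfy $(A_0/H_0)^k=(t+1)^{t+1}(t+2)^{k-t-1}/(k-t+1)^{k-1}$.

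\textbf{Equality case.} You need strict monotonicity of $\rho$ under proper inclusion, which follows from a connected two-section (weak irreducibility) plus Perron--Frobenius. ``Undoing shifts'' does not supply this.

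\textbf{Non-integrality.} Your argument is wrong: nothing prevents $y=x-t+1$ from being built from the primes of both $t+1$ and $t+2$, e.g.\ $y=(t+1)(t+2)$. The correct argument has three steps. First, compare $p$-adic valuations in $(t+2)^{y-2}(t+1)^{t+1}=y^{y+t-2}$ to show that $t+1$ and $t+2$ are both nontrivial perfect powers. Second, invoke Mih\v{a}ilescu's theorem to get $t=7$. Third, exclude $t=7$ by a $2$-adic check.

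\textbf{Uniqueness of $\kappa_t$.} This needs an actual proof that $\Phi_t(x)=(x-t-1)\log(t+2)+(t+1)\log(t+1)-(x-1)\log(x-t+1)$ is strictly decreasing on $[t+2,\infty)$. Convexity is not available, since $\Phi_t''(x)=(2t-3-x)/(x-t+1)^2$ changes sign when $t\ge6$.
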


Theorem~\ref{thm:main} is proved in Section~\ref{sec:proof-main}, after the
preliminaries in Section~\ref{sec:preliminaries} and the reduction and
decomposition arguments in Section~\ref{sec:reduction_decomposition}. After the proof, we discuss how to determine, for given $n,k,t$, which of
$\Acal_{n,k,t}$ and $\Hcal_{n,k,t}$ has the larger spectral radius.
 The proof relies only
on elementary branching, finite-kernel classification, and tensor
variational estimates; in particular, no asymptotic set-theoretic stability
theorem is required.

\section{Spectral preliminaries}
\label{sec:preliminaries}

In this section we collect the spectral facts used throughout the paper and
fix our normalization. 
Throughout, $\Gcal$ and $\Hcal$ denote $k$-uniform hypergraphs.

For computational convenience, we denote, for $m \ge 1$,
\begin{equation}\label{eq:lambda-m-def}
 \lambda_m^{(k)}(\Gcal)=k!\max\left\{
 \sum_{e\in \Gcal}\prod_{i\in e}x_i:
 x_i\ge0,\ \sum_i x_i^m=1
 \right\},
\end{equation}
and use the following normalization:
\begin{equation}\label{eq:lambda-def}
\lambda(\Gcal)\coloneqq
\lambda_k^{(k)}(\Gcal)
=(k-1)!\rho(\Gcal).
\end{equation}

Thus $\lambda$ differs from the usual spectral radius $\rho$ only by the
constant factor $(k-1)!$. Consequently, all extremal comparisons are unchanged
if $\rho$ is replaced by $\lambda$, and we shall work mostly with
$\lambda$ in the proof.

We shall use the following notation throughout:
\[
 r=t+1,\qquad d=k-t-1,\qquad q=k-t+1,
 \qquad \theta=\frac{k-1}{k}.
\]

We begin with two standard elementary properties.

\begin{lemma}[\cite{nikiforov2014analytic}]
\label{lem:basic}
If $\Gcal\subseteq \Hcal$, then $\lambda_m^{(k)}(\Gcal)\le\lambda_m^{(k)}(\Hcal)$. Moreover,
\[
 \lambda_m^{(k)}(\Gcal\cup \Hcal)\le\lambda_m^{(k)}(\Gcal)+\lambda_m^{(k)}(\Hcal).
\]
\end{lemma}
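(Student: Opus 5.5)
Both claims follow directly from the variational definition~\eqref{eq:lambda-m-def}. Write
\[
P_{\Gcal}(x)=\sum_{e\in\Gcal}\prod_{i\in e}x_i ,
\]
so that $\lambda_m^{(k)}(\Gcal)=k!\max_{x\in S_m}P_{\Gcal}(x)$, where
\[
S_m=\Bigl\{x\in\mathbb R_{\ge0}^n:\ \sum_i x_i^m=1\Bigr\}.
\]
The set $S_m$ is compact and $P_{\Gcal}$ is continuous, so every maximum is attained. No deeper spectral theory, such as Perron--Frobenius, is needed.

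For monotonicity, assume $\Gcal\subseteq\Hcal$ and fix any $x\in S_m$. Every product $\prod_{i\in e}x_i$ is nonnegative because $x\ge0$. Hence
\[
P_{\Hcal}(x)=P_{\Gcal}(x)+\sum_{e\in\Hcal\setminus\Gcal}\prod_{i\in e}x_i\ \ge\ P_{\Gcal}(x).
\]
Taking the maximum over $x\in S_m$ on both sides gives $\lambda_m^{(k)}(\Gcal)\le\lambda_m^{(k)}(\Hcal)$.

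For subadditivity, the key observation is that $\Gcal\cup\Hcal$ is a set union, so edges lying in both families are counted once. Thus, for every $x\in S_m$,
\[
P_{\Gcal\cup\Hcal}(x)=P_{\Gcal}(x)+P_{\Hcal}(x)-P_{\Gcal\cap\Hcal}(x)\le P_{\Gcal}(x)+P_{\Hcal}(x),
\]
again because all terms are nonnegative. Now let $x^*\in S_m$ be a maximizer for $P_{\Gcal\cup\Hcal}$. Then
\[
\lambda_m^{(k)}(\Gcal\cup\Hcal)=k!\,P_{\Gcal\cup\Hcal}(x^*)\le k!\,P_{\Gcal}(x^*)+k!\,P_{\Hcal}(x^*)\le\lambda_m^{(k)}(\Gcal)+\lambda_m^{(k)}(\Hcal),
\]
since $x^*$ is admissible in each separate maximization.

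There is no real obstacle. The only points needing care are that nonnegativity of $x$ is what makes both inequalities hold, and that the union must not be read as a multiset. If one prefers not to use the inclusion--exclusion identity, monotonicity alone also works: $\Gcal\cup\Hcal\subseteq\Gcal\uplus\Hcal$ as multisets, and the polynomial of a disjoint union splits additively.
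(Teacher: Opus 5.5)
Your proof is correct. The paper gives no argument of its own for this lemma and only cites Nikiforov's paper, where both facts come out exactly as in your proposal: compare the objective polynomials pointwise on the nonnegative feasible set of \eqref{eq:lambda-m-def}, then take maxima, evaluating at a maximizer of $\Gcal\cup\Hcal$ for subadditivity.
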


The following is a strict monotonicity fact of $\lambda$. For a
hypergraph $\Hcal$, its two-section is the graph on the same vertex set in
which two distinct vertices are adjacent whenever they are contained together
in some edge of $\Hcal$.

\begin{lemma}
\label{lem:strict-monotonicity}
Let $\Gcal$ and $\Hcal$ be $k$-uniform hypergraphs on the same vertex set,
with $\Gcal\subsetneq\Hcal$. If the two-section of $\Hcal$ is connected, then
\[
 \lambda(\Gcal)<\lambda(\Hcal).
\]
\end{lemma}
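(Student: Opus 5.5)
Let $x$ be a maximizer for $\lambda(\Gcal)$ in \eqref{eq:lambda-m-def} with $m=k$. The plan is to show that some optimal vector for $\Hcal$ can be taken strictly positive, and then to derive a contradiction from equality using an edge of $\Hcal\setminus\Gcal$. First, monotonicity (Lemma~\ref{lem:basic}) already gives $\lambda(\Gcal)\le\lambda(\Hcal)$. So suppose equality holds. Then any maximizer $x$ of the Lagrangian $P_\Gcal(x)=\sum_{e\in\Gcal}\prod_{i\in e}x_i$ on the sphere $\sum x_i^k=1$ is also a maximizer for $P_\Hcal$, because
\[
P_\Hcal(x)\ge P_\Gcal(x)=\lambda(\Gcal)/k!=\lambda(\Hcal)/k!.
\]
It follows that $P_\Hcal(x)=P_\Gcal(x)$, so $\prod_{i\in e}x_i=0$ for every $e\in\Hcal\setminus\Gcal$.

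Next we use that $x$ is a nonnegative maximizer for $\Hcal$ with a connected two-section, and show $x>0$. This is the Perron--Frobenius step, and I expect it to be the main obstacle. The Lagrange (KKT) conditions give, for each $i$ with $x_i>0$,
\[
\partial_i P_\Hcal(x)=k\mu x_i^{k-1}, \qquad \mu=P_\Hcal(x)>0.
\]
Suppose the support $S=\{i:x_i>0\}$ were a proper subset of the vertex set. By connectivity there is an edge $e\in\Hcal$ meeting both $S$ and its complement; we want to find such an edge containing a vertex $j\notin S$ whose remaining $k-1$ vertices lie in $S$. A general edge only gives a vertex $j\notin S$ with $e\cap S\ne\emptyset$, so this needs care. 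The cleanest route is to cite the weak Perron--Frobenius theorem for weakly irreducible nonnegative tensors \cite{FriedlandGaubertHan}: a connected two-section is equivalent to weak irreducibility of the adjacency tensor, and weak irreducibility forces the eigenvector attaining $\rho$ to be strictly positive and unique. Since every maximizer of \eqref{eq:rho-def} is a nonnegative eigenvector for $\rho(\Hcal)$, it must then be the positive Perron vector. I would therefore invoke that theorem rather than attempt a direct perturbation argument.

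The perturbation alternative, if one wanted it, would go as follows. Raise $x_j$ from $0$ to $\varepsilon$ and rescale to keep $\sum x_i^k=1$. The Lagrangian gains a term of order $\varepsilon$ times $\partial_jP_\Hcal(x)$, while the normalization costs only order $\varepsilon^k$. So the gain wins whenever $\partial_jP_\Hcal(x)>0$, which requires an edge containing $j$ whose other vertices all lie in $S$. When no such edge exists one would need higher-order terms, and this is exactly the difficulty the Perron--Frobenius citation avoids.

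With $x>0$ established, the product $\prod_{i\in e}x_i$ is positive for any $e\in\Hcal\setminus\Gcal$, and this set of edges is nonempty because $\Gcal\subsetneq\Hcal$. This contradicts the conclusion of the first step, so the equality case is impossible and $\lambda(\Gcal)<\lambda(\Hcal)$.
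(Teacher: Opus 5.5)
Your argument is correct, but it takes a different route from the paper. The paper proves the lemma in one line: it cites a strict-monotonicity theorem for weakly irreducible nonnegative tensors (Kannan--Shaked-Monderer--Berman, Theorem~3.2) and applies it to $\Acal(\Gcal)\le\Acal(\Hcal)$, which is strict in some entry. You instead stay inside the variational problem. Under equality, a maximizer $x$ for $\Gcal$ is also a maximizer for $\Hcal$ and kills every monomial coming from $\Hcal\setminus\Gcal$, so everything reduces to positivity of maximizers for $\Hcal$.

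Two remarks on that positivity step. First, the Perron--Frobenius theorem for weakly irreducible tensors is typically stated as existence and uniqueness, up to scaling, of a \emph{positive} eigenvector. Passing to an arbitrary nonnegative maximizer therefore needs one more line. Let $u>0$ be the Perron vector and $s=\max_i x_i/u_i$. Comparing the eigen-equations at any $i$ with $x_i=su_i$, and using $x\le su$, shows that $x_j=su_j$ for every $j$ sharing an edge with $i$. By connectivity, $x=su>0$.

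Second, and more usefully, the perturbation route you set aside does not actually get stuck. Suppose $S=\{i:x_i>0\}$ is a proper subset. Connectivity of the two-section gives an edge $e$ meeting both $S$ and its complement. Raise \emph{all} coordinates in $e\setminus S$ to $\varepsilon$ at once, not just one. With $m=|e\setminus S|\le k-1$, the objective increases by at least $\varepsilon^m\prod_{i\in e\cap S}x_i$. Restoring $\sum_i x_i^k=1$ divides it by $1+m\varepsilon^k$, a relative loss of order $\varepsilon^k$. Since $m<k$, this contradicts optimality for small $\varepsilon$.

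With this fix, your proof is fully self-contained and elementary. It needs neither the equivalence with weak irreducibility, nor tensor Perron--Frobenius theory, nor the identification of the variational maximum with the eigenvalue-theoretic spectral radius. The paper's proof is shorter, but it rests entirely on the cited theorem.
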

\begin{proof}
Let $\Acal(\Gcal)$ and $\Acal(\Hcal)$ be the adjacency tensors of $\Gcal$
and $\Hcal$. Then $\Gcal\subsetneq\Hcal$ implies
$\Acal(\Gcal)\le \Acal(\Hcal)$ coefficientwise, with strict inequality in at
least one entry. The connectedness of the two-section of $\Hcal$ is
equivalent to weak irreducibility of $\Acal(\Hcal)$. The assertion therefore
follows directly from \cite[Theorem~3.2]{KannanShakedMondererBerman}.
\end{proof}

We shall also use the following edge-count estimate, which is the
$\alpha=k$ case of the bound used by Keevash--Lenz--Mubayi.

\begin{lemma}[\cite{KLM}]
\label{lem:edge-bound}
If a $k$-graph $\Gcal$ has $m$ edges, then
\begin{equation}\label{eq:edge-bound}
 \lambda(\Gcal)\le(k!m)^\theta.
\end{equation}
\end{lemma}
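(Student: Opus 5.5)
The statement to prove is Lemma~\ref{lem:edge-bound}: for a $k$-graph $\Gcal$ with $m$ edges, $\lambda(\Gcal)\le (k!m)^\theta$, where $\theta=(k-1)/k$. The plan is to bound the multilinear form directly at an optimal vector. Fix a nonnegative vector $x$ with $\sum_i x_i^k=1$ that attains the maximum in \eqref{eq:lambda-m-def} for $m=k$. Then
\[
\lambda(\Gcal)=k!\sum_{e\in\Gcal}\prod_{i\in e}x_i .
\]
We need to show $\sum_{e}\prod_{i\in e}x_i\le (k!)^{-1/k}m^{\theta}$.

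\textbf{Step 1: H\"older over edges.} Applying H\"older's inequality with exponents $k/(k-1)$ and $k$ to the sum over the $m$ edges gives
\[
\sum_{e}\prod_{i\in e}x_i\le m^{\theta}\Bigl(\sum_e\prod_{i\in e}x_i^k\Bigr)^{1/k}.
\]

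\textbf{Step 2: Maclaurin-type bound.} Set $y_i=x_i^k$, so that $\sum_i y_i=1$. Then
\[
\sum_e\prod_{i\in e}y_i\le e_k(y)\le \frac{1}{k!}\Bigl(\sum_i y_i\Bigr)^k=\frac1{k!}.
\]
The middle inequality holds because expanding $(\sum_i y_i)^k$ produces every product of $k$ distinct indices exactly $k!$ times, and all the remaining terms are nonnegative.

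\textbf{Step 3: Combine.} Putting the two steps together yields
\[
\lambda(\Gcal)\le k!\,m^{\theta}(k!)^{-1/k}=(k!)^{\theta}m^{\theta}=(k!m)^{\theta},
\]
which is the claimed bound.

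There is no real obstacle here. The only point requiring care is the normalization: $\lambda$ carries the factor $k!$, and the constraint is on the $k$-th powers. Both are handled by the H\"older exponents chosen in Step 1, and the argument also confirms that this is the $\alpha=k$ case of the Keevash--Lenz--Mubayi bound.
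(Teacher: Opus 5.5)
Your proof is correct. H\"older's inequality over the $m$ edges, followed by $\sum_{e}\prod_{i\in e}y_i\le e_k(y)\le (\sum_i y_i)^k/k!=1/k!$, gives exactly $(k!)^{1-1/k}m^{\theta}=(k!m)^{\theta}$. The paper does not prove this lemma itself but cites Keevash--Lenz--Mubayi, and your route appears to be the standard power-mean-plus-Maclaurin argument for this bound, here in the case $\alpha=k$.
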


For a vector $x=(x_i)_{i\in[n]}$ and a permutation $\sigma$ of $[n]$, let
$x^\sigma$ denote the coordinate permutation of $x$ defined by
\[
 (x^\sigma)_i=x_{\sigma(i)}.
\]
The next lemma is a group-orbit version of the transposition symmetrization
used in \cite{KLM}. It shows that, when optimizing over a hypergraph with
symmetries, one may choose an optimizer respecting those symmetries.

\begin{lemma}
\label{lem:symmetrization}
Let $m\ge k$, and let $A\le \operatorname{Aut}(\Gcal)$ be a subgroup of the automorphism group
of a $k$-graph $\Gcal$. Then $\Gcal$ has a nonnegative maximizing vector for
\eqref{eq:lambda-m-def} that is constant on every orbit of $A$. In particular,
one may choose a maximizing vector that is constant on every orbit of
$\operatorname{Aut}(\Gcal)$.
\end{lemma}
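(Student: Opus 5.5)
The plan is to average over the group. Write $P(x)=\sum_{e\in\Gcal}\prod_{i\in e}x_i$ for the multilinear form. Since $A$ is a group of automorphisms of $\Gcal$, we have $P(x^\sigma)=P(x)$ for every $\sigma\in A$. Indeed, the map $e\mapsto\sigma(e)$ permutes the edges, and $\prod_{i\in e}x_{\sigma(i)}=\prod_{j\in\sigma(e)}x_j$. The constraint $\sum_i x_i^m=1$ and nonnegativity are also invariant under coordinate permutations. So for any maximizer $x$, every $x^\sigma$ with $\sigma\in A$ is again a maximizer.

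To build an invariant maximizer, fix a nonnegative maximizer $x$; one exists by compactness of the feasible set. Define
\[
 y_i=\Bigl(\frac{1}{|A|}\sum_{\sigma\in A}x_{\sigma(i)}^m\Bigr)^{1/m}.
\]
Then $y\ge0$ and $\sum_i y_i^m=\frac1{|A|}\sum_{\sigma}\sum_i x_{\sigma(i)}^m=1$. Moreover $y$ is constant on $A$-orbits, because $i\mapsto \{\sigma(i)\}_{\sigma\in A}$ lists each orbit element with the same multiplicity. It remains to show $P(y)\ge P(x)$, which makes $y$ a maximizer.

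The key step is a generalized Hölder (power-mean) inequality applied edge by edge. For an edge $e=\{i_1,\dots,i_k\}$, Hölder with $k$ factors, each in $L^k$ over the uniform measure on $A$, gives
\[
 \frac1{|A|}\sum_{\sigma}\prod_{j=1}^k x_{\sigma(i_j)}\le\prod_{j=1}^k\Bigl(\frac1{|A|}\sum_\sigma x_{\sigma(i_j)}^k\Bigr)^{1/k}.
\]
By the power-mean inequality, since $m\ge k$, each factor is at most $\bigl(\frac1{|A|}\sum_\sigma x_{\sigma(i_j)}^m\bigr)^{1/m}=y_{i_j}$. This is exactly where the hypothesis $m\ge k$ is used. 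Summing over $e\in\Gcal$ gives
\[
 P(y)\ge\frac1{|A|}\sum_{\sigma\in A}P(x^\sigma)=P(x),
\]
so $y$ is a maximizer constant on orbits. Taking $A=\operatorname{Aut}(\Gcal)$ gives the final assertion.

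The main point requiring care is this Hölder/power-mean step, since the naive arithmetic averaging of $x$ does not preserve the $\ell^m$ constraint. Averaging the $m$-th powers is what makes the constraint exact and the inequality go in the right direction. An alternative, mirroring \cite{KLM}, would be to apply transposition symmetrizations iteratively. However, the orbit-average definition of $y$ avoids any limiting argument, because $A$ is finite.
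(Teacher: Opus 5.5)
Your proof is correct and follows the paper's argument: you build the same vector $\overline x$ by averaging the $m$-th powers $x_i^m$ over $A$. Your Hölder-plus-power-mean estimate, applied edge by edge, is exactly the inequality $f_{\Gcal}(\overline y)\ge f_{\Gcal}(y)$ that the paper obtains from Jensen's inequality for the concave map $y\mapsto\bigl(\prod_{i\in e}y_i\bigr)^{1/m}$, with your power-mean step playing the role of the condition $k/m\le 1$.
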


\begin{proof}
Let $x=(x_i)_{i\in[n]}$ be a nonnegative maximizing vector for
\eqref{eq:lambda-m-def}, and set
\[
 y_i=x_i^m.
\]
Define the average of the $A$-orbit of $y$ by
\[
 \overline y
 =
 \frac{1}{|A|}\sum_{\alpha\in A} y^\alpha,
 \qquad
 \overline x_i=\overline y_i^{1/m}.
\]
We shall show that $\overline x$ is feasible for \eqref{eq:lambda-m-def} and
that replacing $x$ by $\overline x$ does not decrease the objective.

First,
\[
 \sum_{i\in[n]}\overline x_i^m
 =
 \sum_{i\in[n]}\overline y_i
 =
 \frac{1}{|A|}\sum_{\alpha\in A}\sum_{i\in[n]} y_{\alpha(i)}
 =
 \frac{1}{|A|}\sum_{\alpha\in A}\sum_{i\in[n]} y_i
 =
 1.
\]
Thus $\overline x$ is feasible.

Under the change of variables $y_i=x_i^m$, the variational problem in
\eqref{eq:lambda-m-def} may be written as
\[
 \lambda_m^{(k)}(\Gcal)
 =
 k!\max\left\{
 f_{\Gcal}(y): y_i\ge0,\ \sum_{i\in[n]}y_i=1
 \right\},
\]
where
\[
 f_{\Gcal}(y)
 =
 \sum_{e\in\Gcal}
 \left(\prod_{i\in e}y_i\right)^{1/m}.
\]
For every $\alpha\in\operatorname{Aut}(\Gcal)$, we have
\begin{equation}\label{eq:permutatively-invariant}
 f_{\Gcal}(y^\alpha)=\sum_{e\in\Gcal}
 \left(\prod_{i\in e}y_{\alpha(i)}\right)^{1/m}=
 \sum_{e\in\Gcal}
 \left(\prod_{j\in \alpha(e)}y_j\right)^{1/m} =
 \sum_{e'\in \alpha(\Gcal)}
 \left(\prod_{j\in e'}y_j\right)^{1/m}
 =f_{\Gcal}(y),
\end{equation}
since $\alpha(\Gcal)=\Gcal$.

The function $f_{\Gcal}$ is concave on the nonnegative orthant: each summand
\[
 (y_i)_{i\in e}\longmapsto
 \left(\prod_{i\in e}y_i\right)^{1/m}
\]
is a monomial with nonnegative exponents summing to $k/m\le1$, and hence is
concave on the nonnegative orthant.
Therefore Jensen's inequality gives
\[
 f_{\Gcal}(\overline y)
 =
 f_{\Gcal}\left(\frac{1}{|A|}\sum_{\alpha\in A}y^\alpha\right)
 \ge
 \frac{1}{|A|}\sum_{\alpha\in A} f_{\Gcal}(y^\alpha)
 =
 f_{\Gcal}(y),
\]
where the last equality follows from \eqref{eq:permutatively-invariant}.
Thus $\overline x$ is also a maximizing vector.

It remains to check that $\overline x$ is constant on the orbits of $A$.
For every $\beta\in A$,
\[
 \overline y_{\beta(i)}
 =
 \frac{1}{|A|}\sum_{\alpha\in A} y_{\alpha(\beta(i))}
 =
 \frac{1}{|A|}\sum_{\gamma\in A} y_{\gamma(i)}
 =
 \overline y_i,
\]
where we used the change of variables $\gamma=\alpha\beta$. Hence
$\overline x_{\beta(i)}=\overline x_i$ for every $\beta\in A$ and every
$i\in[n]$. Therefore $\overline x$ is constant on each orbit of $A$.
\end{proof}

\section{Kernel reduction and product-layer decomposition}
\label{sec:reduction_decomposition}

This section contains the structural reduction for the main theorem. The first
step is to associate with each maximal nontrivial $t$-intersecting family a
finite kernel, consisting of its minimum $t$-covers. The second step is to
decompose the family into product layers according to its intersections with
this kernel. We then estimate the spectral contribution of these layers.

\subsection{Reduction to the cover kernel}
\label{sec:cover-kernel}

Let $\Gcal\subseteq\binom{[n]}k$ be $t$-intersecting. Its
\textit{$t$-covering number} is
\[
\tau_t(\Gcal)\coloneqq
\min\{|C|: |C\cap G|\ge t \text{ for every } G\in\Gcal\}.
\]
Clearly, $\tau_t(\Gcal)\ge t$. Moreover, $\tau_t(\Gcal)=t$ if and only if
$\Gcal$ is contained in a full $k$-uniform $t$-star. Hence every nontrivial
$t$-intersecting family satisfies $\tau_t(\Gcal)\ge t+1$

We say that $\Gcal\subseteq\binom{[n]}k$ is \textit{maximal} if no
$k$-set outside $\Gcal$ can be added while preserving the
$t$-intersecting property. Equivalently, every $k$-set outside $\Gcal$
fails to $t$-intersect at least one member of $\Gcal$. Every $t$-intersecting
$k$-uniform family can be extended to a maximal one by repeatedly adding a
$k$-set that $t$-intersects every member of the current family. Adding edges
cannot decrease the $t$-covering number, so this extension preserves
nontriviality.

Recall that $r=t+1$. We define
\[
 \Kcal\coloneqq\left\{T\in\tbinom{[n]}r:
 |T\cap G|\ge t\text{ for every }G\in\Gcal\right\},
\]
to be the kernel of $\Gcal$, and let
\[
 \Lcal_n(\Kcal)\coloneqq
 \{F\in\tbinom{[n]}k:T\subseteq F\text{ for some }T\in\Kcal\}
\]
be the lift of $\Kcal$.

\begin{prop}
\label{prop:cover-kernel}
Assume $n\ge 2k+2$, and let
$\Gcal\subseteq\binom{[n]}k$ be a maximal nontrivial $t$-intersecting
family.
\begin{itemize}
\item[\rm (i)] If $\tau_t(\Gcal)\ge t+2$, then $\Gcal$ is covered by at most
$B_0\coloneqq \binom kt k^2$ full $k$-uniform $(t+2)$-stars.

\item[\rm (ii)] If $\tau_t(\Gcal)=t+1$, then
$\Lcal_n(\Kcal)\subseteq\Gcal$, and
$\Gcal\setminus\Lcal_n(\Kcal)$ is covered by at most
$B_R\coloneqq rk^2$ full $k$-uniform $(t+2)$-stars. Moreover, the
kernel $\Kcal$ is a nonempty $t$-intersecting $r$-graph, and one of the
following holds:
\begin{itemize}
\item[\rm (a)] $\Kcal\subseteq\binom Zr$ for some set $Z$ with $|Z|=r+1$;
\item[\rm (b)] $\Kcal\subseteq\{X\cup\{y\}:y\in Y\}$ for some $t$-set $X$
and some $q$-set $Y$ disjoint from $X$.
\end{itemize}
If $\Kcal=\binom Zr$ in {\rm (a)}, then $\Gcal\cong\Acal_{n,k,t}$; if
$\Kcal=\{X\cup\{y\}:y\in Y\}$ in {\rm (b)}, then
$\Gcal\cong\Hcal_{n,k,t}$.
\end{itemize}
\end{prop}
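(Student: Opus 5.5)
The plan is the classical Frankl branching argument on $t$-covers, combined with maximality. Two facts are used throughout. First, if $C$ is any set with $|C\cap G|\ge t$ for all $G\in\Gcal$, then every $G\in\Gcal$ contains a $t$-subset of $C$. Second, the branching step: if $S\subseteq G_0$ for all $G_0$ in a subfamily $\Gcal_S$, with $|S|<\tau_t(\Gcal)$, then some $G'\in\Gcal$ satisfies $|G'\cap S|<t$. Since every $G\in\Gcal_S$ meets $G'$ in at least $t$ elements, each such $G$ contains $S$ together with one more element of $G'\setminus S$. That element must in fact come from a fixed set of size at most $k$, and this refines $S$ to $S\cup\{y\}$ with at most $k$ choices.

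For (i), start from a single edge $G_1$. Every $G\in\Gcal$ meets $G_1$ in at least $t$ elements, so $G\supseteq T$ for one of the $\binom kt$ $t$-subsets $T\subseteq G_1$. Because $\tau_t\ge t+2>|T|$, there is an edge $G_2$ with $|G_2\cap T|<t$. Then every $G\supseteq T$ must contain an element of $G_2\setminus T$ (indeed at least $t-|G_2\cap T|\ge1$ of them), giving at most $k$ choices of $T\cup\{y\}$. Since $|T\cup\{y\}|=t+1<\tau_t$, we can branch once more in the same way and get at most $k$ further choices. Altogether $\Gcal$ is covered by at most $\binom kt k^2$ stars of $(t+2)$-sets. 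In the branching I will need the inequality $|G\cap G_2|\ge t$ to force new elements of $G_2$ into $G$. When $|G_2\cap T|\le t-2$ this forces more than one new element, which only helps.

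For (ii), fix a minimal $t$-cover $T_0$ of size $r=t+1$, so $T_0\in\Kcal$. Take $F\supseteq T$ with $T\in\Kcal$. For every $G\in\Gcal$ we have $|F\cap G|\ge|T\cap G|\ge t$, so by maximality $F\in\Gcal$; hence $\Lcal_n(\Kcal)\subseteq\Gcal$. Next, $\Kcal$ is $t$-intersecting. For $T,T'\in\Kcal$, nontriviality rules out the situation where every edge contains $T\cap T'$. Using $n\ge 2k+2$, one builds a $k$-set containing $T$ and avoiding $T'\setminus T$ as far as possible; it lies in $\Lcal_n(\Kcal)\subseteq\Gcal$, and it must meet $T'$ in at least $t$ elements, which forces $|T\cap T'|\ge t$. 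A $t$-intersecting family of $(t+1)$-sets is a standard object: either all its members share a common $t$-set, which is case (b) (or a subcase of it), or they all lie inside a $(t+2)$-set, which is case (a). The bound $|Y|\le q$ in (b) comes from $\Lcal_n(\Kcal)\subseteq\Gcal$ together with nontriviality. There must be an edge $G\in\Gcal$ not containing $X$, so $|G\cap X|=t-1$. Every $X\cup\{y\}\in\Kcal$ must meet $G$ in at least $t$ elements, which forces $y\in G\setminus X$, and this set has size $k-t+1=q$. For the remainder $\Gcal\setminus\Lcal_n(\Kcal)$, each edge $G$ contains a $t$-subset $S$ of $T_0$, with $r$ choices. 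Since $|S|=t<\tau_t$, we branch twice as in (i). The resulting $(t+1)$-set $S\cup\{y\}$ is not in $\Kcal$ (otherwise $G\in\Lcal_n(\Kcal)$), so it is not a $t$-cover and a second branching is possible. This gives at most $rk^2$ stars of $(t+2)$-sets.

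For the identification statements, suppose $\Kcal=\binom Zr$. Then $\Lcal_n(\Kcal)=\Acal$ up to isomorphism. Every $G\in\Gcal$ must $t$-intersect every $F\supseteq T$ with $T\in\Kcal$, where $F$ is chosen to avoid $G$ outside $T$ (possible since $n\ge 2k+2$). This forces $|G\cap T|\ge t$ for all $T\in\binom Zr$, i.e.\ $|G\cap Z|\ge t+1$, so $\Gcal=\Acal$. In case (b) with $\Kcal=\{X\cup\{y\}:y\in Y\}$, the same test shows each $G\in\Gcal$ either contains $X$ and meets $Y$, or misses exactly one point of $X$ and contains all of $Y$. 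The $k$-sets of the second kind are exactly $(X\setminus\{j\})\cup Y$, which is $\Hcal_{n,k,t}$. The main obstacle I expect is the $t$-intersecting property of $\Kcal$ and the bound $|Y|\le q$. These need careful choice of test sets, using $n\ge 2k+2$ so that the test sets can avoid prescribed elements.
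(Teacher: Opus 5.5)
Your overall route matches the paper's: two-step branching from a fixed edge in (i) and from a fixed kernel edge in (ii), maximality to get $\Lcal_n(\Kcal)\subseteq\Gcal$, classification of $t$-intersecting $(t+1)$-graphs, and an edge missing a point of $X$ to bound $|Y|$. Your one-test-set proof that $\Kcal$ is $t$-intersecting is valid and slightly leaner than the paper's two-set construction. A $k$-set $F\supseteq T$ avoiding $T'\setminus T$ lies in $\Gcal$, so $|T\cap T'|=|F\cap T'|\ge t$. The preceding remark about nontriviality and $T\cap T'$ plays no role there.

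The gap is in the identification for case (b). You say ``the same test'' shows that every $G\in\Gcal$ containing $X$ also meets $Y$. The test only gives $|G\cap(X\cup\{y\})|\ge t$ for $y\in Y$, and this is automatic once $X\subseteq G$. Indeed, a $k$-set $X\cup W$ with $W\cap Y=\emptyset$ $t$-intersects every member of $\Lcal_n(\Kcal)$, so no argument using only the kernel edges or the lift can exclude it.

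What excludes it is nontriviality. Some $G_0\in\Gcal$ does not contain $X$. Your own $|Y|\le q$ argument, now with $|Y|=q$, forces $G_0=(X\setminus\{x_0\})\cup Y$. Then any $G\supseteq X$ with $G\cap Y=\emptyset$ has $|G\cap G_0|=t-1$, a contradiction. This is exactly how the paper handles it. Your test does correctly show that members not containing $X$ are of the form $(X\setminus\{j\})\cup Y$.

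To get equality with a copy of $\Hcal_{n,k,t}$, rather than mere containment, you must also invoke maximality. All $t$ sets $(X\setminus\{j\})\cup Y$ $t$-intersect every member of $\Gcal$, so they belong to $\Gcal$.

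Finally, you assert the classification of $t$-intersecting $(t+1)$-uniform families (a common $t$-set, or all inside a $(t+2)$-set) as standard. It is true, but it is a substantive step of this proposition and needs the short case analysis the paper gives. With these repairs, case (a) and the rest of your argument go through as written.
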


\begin{proof}
Fix $A\in\Gcal$. Suppose first that $\tau_t(\Gcal)\ge t+2$. For each
$F\in\Gcal$, choose a $t$-set $T\subseteq A\cap F$. Since $T$ is not a
$t$-cover, there is $B_T\in\Gcal$ with $|B_T\cap T|\le t-1$. As $F$ and
$B_T$ $t$-intersect and $T\subseteq F$, the set $F$ contains some
$x\in B_T\setminus T$. The set $T\cup\{x\}$ is again not a $t$-cover,
since $\tau_t(\Gcal)\ge t+2$; hence there is $C_{T,x}\in\Gcal$ with
$|C_{T,x}\cap (T\cup\{x\})|\le t-1$. Intersecting $F$ with $C_{T,x}$
forces $F$ to contain some $y\in C_{T,x}\setminus (T\cup\{x\})$. Thus
every $F\in\Gcal$ contains one of the prescribed $(t+2)$-sets
$T\cup\{x,y\}$. There are at most $\binom kt$ choices for $T$, at most
$k$ choices for $x$, and at most $k$ choices for $y$, so $\Gcal$ is
covered by at most $B_0=\binom kt k^2$ full $k$-uniform $(t+2)$-stars.

Now suppose that $\tau_t(\Gcal)=t+1$. Then $\Kcal$ is nonempty. We first
prove that $\Lcal_n(\Kcal)\subseteq\Gcal$. Indeed, every $k$-set
containing a member of $\Kcal$ $t$-intersects every member of $\Gcal$,
and hence belongs to $\Gcal$ by maximality.

Next fix $T\in\Kcal$ and let $F\in\Gcal\setminus\Lcal_n(\Kcal)$. Since
$T$ is a $t$-cover but $T\nsubseteq F$, we have $|F\cap T|=t$. Write
$S=F\cap T=T\setminus\{i\}$ for some $i\in T$. Since $S$ is not a
$t$-cover, choose $B_i\in\Gcal$ with $|B_i\cap S|\le t-1$. As $F$ and
$B_i$ $t$-intersect, $F$ contains some $x\in B_i\setminus S$. The set
$S\cup\{x\}$ is not in $\Kcal$, for otherwise $F$ would contain a kernel
edge; since $|S\cup\{x\}|=r=t+1$, it is not a $t$-cover. Hence there is
$C_{i,x}\in\Gcal$ with $|C_{i,x}\cap(S\cup\{x\})|\le t-1$. Intersecting
$F$ with $C_{i,x}$ forces $F$ to contain some
$y\in C_{i,x}\setminus(S\cup\{x\})$. Thus every member of
$\Gcal\setminus\Lcal_n(\Kcal)$ contains one of at most $rk^2$ prescribed
$(t+2)$-sets, and so this part is covered by at most $B_R=rk^2$ full
$k$-uniform $(t+2)$-stars.

We now show that $\Kcal$ is $t$-intersecting. Otherwise, take
$T_1,T_2\in\Kcal$ with $|T_1\cap T_2|\le t-1$. Since $n\ge 2k+2$, we
may extend them to $k$-sets $G_1\supseteq T_1$ and $G_2\supseteq T_2$
using disjoint new vertices outside $T_1\cup T_2$, so that
$|G_1\cap G_2|=|T_1\cap T_2|$. Each $T_i$ is a $t$-cover, so each $G_i$
$t$-intersects every member of $\Gcal$; by maximality,
$G_1,G_2\in\Gcal$. But then $|G_1\cap G_2|\le t-1$, contradicting that
$\Gcal$ is $t$-intersecting. Hence $\Kcal$ is $t$-intersecting.

It remains to classify $\Kcal$. If $\Kcal$ has a single edge
$T=X\cup\{y\}$ with $|X|=t$, extend $\{y\}$ to a $q$-set
$Y\subseteq[n]\setminus X$. Then
$\Kcal=\{T\}\subseteq\{X\cup\{z\}:z\in Y\}$, so {\rm (b)} holds.
Hence assume that $\Kcal$ contains two distinct edges. Choose distinct
$K_1,K_2\in\Kcal$. Since $|K_1|=|K_2|=r=t+1$ and $\Kcal$ is
$t$-intersecting, $|K_1\cap K_2|=t$. Thus
$K_1=X\cup\{a\}$ and $K_2=X\cup\{b\}$ for some $t$-set $X$ and distinct
$a,b\notin X$.

Suppose first that every edge of $\Kcal$ contains $X$. Then
$\Kcal=\{X\cup\{y\}:y\in Y_0\}$ for some $Y_0\subseteq[n]\setminus X$.
Since $\tau_t(\Gcal)=t+1$, the $t$-set $X$ is not a $t$-cover; choose
$G_0\in\Gcal$ with $|G_0\cap X|\le t-1$. For each $y\in Y_0$, the set
$X\cup\{y\}$ is a $t$-cover, so $|G_0\cap(X\cup\{y\})|\ge t$. Hence
$|G_0\cap X|=t-1$ and $Y_0\subseteq G_0$. Setting $Y=G_0\setminus X$,
we have $|Y|=k-(t-1)=q$, and therefore
$\Kcal\subseteq\{X\cup\{y\}:y\in Y\}$. Thus {\rm (b)} holds.

It remains to consider the case where some $K_3\in\Kcal$ does not
contain $X$. Since $K_3$ meets both $X\cup\{a\}$ and $X\cup\{b\}$ in at
least $t$ points, it must be of the form
$K_3=(X\setminus\{z\})\cup\{a,b\}$ for some $z\in X$. We claim that
every $K_4\in\Kcal$ is contained in $X\cup\{a,b\}$. Indeed, if
$|K_4\cap X|\le t-2$, then $K_4$ cannot meet both $K_1$ and $K_2$ in
$t$ points. If $|K_4\cap X|=t-1$, then meeting both $K_1$ and $K_2$ in
at least $t$ points forces $a,b\in K_4$. Finally, if
$|K_4\cap X|=t$, then $K_4=X\cup\{c\}$ for some $c$, and comparison
with $K_3$ gives $c\in\{a,b\}$. Thus $K_4\subseteq X\cup\{a,b\}$ in all
cases. Taking $Z=X\cup\{a,b\}$ gives
$\Kcal\subseteq\binom Zr$ with $|Z|=t+2=r+1$, so {\rm (a)} holds.

It remains to identify the two equality cases. First suppose that
$\Kcal=\binom Zr$ for some $Z$ with $|Z|=t+2$. Then
$\Lcal_n(\Kcal)$ is, up to relabeling, $\Acal_{n,k,t}$, and
$\Lcal_n(\Kcal)\subseteq\Gcal$. If some
$F\in\Gcal\setminus\Lcal_n(\Kcal)$ existed, then $F$ would contain no
member of $\binom Zr$, so $|F\cap Z|\le t$. If $|F\cap Z|\le t-1$, then
$F$ would meet every $T\in\Kcal$ in fewer than $t$ points, impossible
since $T$ is a $t$-cover. Hence $|F\cap Z|=t$. Choosing
$z\in F\cap Z$ and setting $T=Z\setminus\{z\}\in\Kcal$, we get
$|F\cap T|=t-1$, again impossible. Therefore
$\Gcal=\Lcal_n(\Kcal)\cong\Acal_{n,k,t}$.

Finally suppose that
$\Kcal=\{X\cup\{y\}:y\in Y\}$, where $|X|=t$, $|Y|=q$, and
$X\cap Y=\emptyset$. Since $X$ is not a $t$-cover, choose
$G_0\in\Gcal$ with $|G_0\cap X|\le t-1$. As each $X\cup\{y\}$ is a
$t$-cover, we have $|G_0\cap X|=t-1$ and $Y\subseteq G_0$; since
$(t-1)+q=k$, this gives $G_0=(X\setminus\{x_0\})\cup Y$ for some
$x_0\in X$.

Let $F\in\Gcal\setminus\Lcal_n(\Kcal)$. If $X\subseteq F$, then
$F\cap Y=\emptyset$, and so $|F\cap G_0|\le t-1$, a contradiction. Thus
$X\nsubseteq F$. If $|F\cap X|\le t-2$, then for every $y\in Y$ we have
$|F\cap(X\cup\{y\})|\le t-1$, contradicting that $X\cup\{y\}$ is a
$t$-cover. Therefore $|F\cap X|=t-1$. Since each $X\cup\{y\}$ is a
$t$-cover, this forces $Y\subseteq F$, and hence
$F=(X\setminus\{x\})\cup Y$ for some $x\in X$.

Thus every member of $\Gcal$ outside the lift is one of the $t$
exceptional sets $(X\setminus\{x\})\cup Y$, $x\in X$. These sets
$t$-intersect the lift and pairwise $t$-intersect one another, so by
maximality all of them belong to $\Gcal$. Therefore
\[
 \Gcal
 =
 \Lcal_n(\Kcal)\cup
 \{(X\setminus\{x\})\cup Y:x\in X\}
 \cong \Hcal_{n,k,t},
\]
after relabeling $X$ as $[t]$ and $Y$ as $[t+1,k+1]$.
\end{proof}

\subsection{Product layers and kernel gaps}
\label{sec:product-layer}

Recall that $r=t+1$ and $\theta=1-1/k$. For $s\ge1$, let
\[
 \Scal_s^{(r)}=\{[t]\cup\{y\}:y\in Y\},
\]
where $Y$ is an $s$-set disjoint from $[t]$; this is the $r$-uniform $t$-star with $s$ leaves. Also let
\[
 \Tcal^{(r)}=\binom{[r+1]}r
\]
be the complete $r$-graph on $r+1$ vertices.
Having reduced the possible kernels to subgraphs of the two finite models
$\Scal_q^{(r)}$ and $\Tcal^{(r)}$ in the previous subsection, we now estimate the spectral radii of
the corresponding lifted product layers.

Let $\Kcal$ be an $r$-graph on a fixed vertex set $U\subseteq[n]$ and set
$V=[n]\setminus U$ and $N=|V|$. Define its pure product
layer by
\[
 \Mcal_{n,k}(\Kcal)
 =
 \{T\cup D:T\in\Kcal,\ D\in\tbinom Vd\}
 \subseteq \Lcal_n(\Kcal).
\]
For the $r$-graph $\Kcal$, we use the $r$-uniform $k$-norm normalization of \eqref{eq:lambda-m-def}:
\[
 \lambda_k^{(r)}(\Kcal)
 =
 r!\max\left\{
 \sum_{T\in\Kcal}\prod_{i\in T}y_i:
 y_i\ge0,\ \sum_{i\in U}y_i^k=1
 \right\}.
\]

\begin{lemma}
\label{lem:product}
Let $\Kcal$ be an $r$-graph on a fixed set $U$, let
$V=[n]\setminus U$, $N=|V|$, and set $d=k-r$. Then
\[
 \lambda(\Mcal_{n,k}(\Kcal))
 =
 \frac{k!}{r!}\lambda_k^{(r)}(\Kcal)
 \left(\frac rk\right)^{r/k}
 \left(\frac dk\right)^{d/k}
 \binom Nd N^{-d/k}.
\]
\end{lemma}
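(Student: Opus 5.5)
The plan is to compute the maximum in \eqref{eq:lambda-m-def} (with $m=k$) exactly, by first reducing to vectors that are constant on $V$, which the symmetrization lemma allows. Every permutation of $[n]$ that fixes $U$ pointwise and permutes $V$ arbitrarily maps $\Mcal_{n,k}(\Kcal)$ onto itself, since it sends $T\cup D$ to $T\cup D'$ with $D'\in\binom Vd$. So $\operatorname{Sym}(V)\le\operatorname{Aut}(\Mcal_{n,k}(\Kcal))$. By Lemma~\ref{lem:symmetrization} with $A=\operatorname{Sym}(V)$ and $m=k$, there is a maximizing vector $x$ that is constant on $V$, say $x_v=c\ge0$ for all $v\in V$. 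Write $y=(x_i)_{i\in U}$. Conversely, every vector of this shape is feasible, so $\lambda(\Mcal_{n,k}(\Kcal))$ equals the maximum over such vectors; this gives both the upper and the lower bound at once.

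For such a vector the objective factorizes. Because edges are $T\cup D$ with $T\subseteq U$ and $D\subseteq V$ disjoint,
\[
 \sum_{e\in\Mcal_{n,k}(\Kcal)}\prod_{i\in e}x_i
 =\Bigl(\sum_{T\in\Kcal}\prod_{i\in T}y_i\Bigr)\binom Nd c^d .
\]
The constraint becomes $\sum_{i\in U}y_i^k+Nc^k=1$. Introduce a parameter $a\in[0,1]$ with $\sum_{U}y_i^k=a$ and $Nc^k=1-a$, so $c^d=((1-a)/N)^{d/k}$. Since $P_\Kcal(y)=\sum_{T}\prod_{i\in T}y_i$ is homogeneous of degree $r$, scaling gives
\[
 \max\Bigl\{P_\Kcal(y):\textstyle\sum_U y_i^k=a\Bigr\}=a^{r/k}\,\lambda_k^{(r)}(\Kcal)/r!.
\]
Hence
\[
 \lambda(\Mcal_{n,k}(\Kcal))=\frac{k!}{r!}\lambda_k^{(r)}(\Kcal)\binom Nd N^{-d/k}\max_{0\le a\le1}a^{r/k}(1-a)^{d/k}.
\]

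The remaining one-variable maximization is elementary. Taking logarithms and differentiating, $a^{r/k}(1-a)^{d/k}$ is maximized at $a=r/(r+d)=r/k$, because $r+d=k$. Its maximum value is $(r/k)^{r/k}(d/k)^{d/k}$, which yields the stated formula. The degenerate case $d=0$ is consistent with the convention $0^0=1$, although here $d=k-t-1\ge1$ anyway.

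I expect no genuine obstacle. The only points needing care are two checks. First, $\operatorname{Sym}(V)$ is really a group of automorphisms: this holds because membership in the layer depends only on $T=e\cap U$ and on $|e\cap V|=d$. Second, the inner maximization over $y$ is independent of $c$, which the product structure guarantees, so the two-stage optimization is exact.
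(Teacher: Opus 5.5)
Your proof is correct, but it handles the exterior factor differently from the paper. The paper does not symmetrize. For an arbitrary feasible $x$ it sets $u=\sum_{i\in U}x_i^k$ and bounds the kernel factor by $\lambda_k^{(r)}(\Kcal)u^{r/k}/r!$. It then bounds the elementary symmetric sum $\sum_{D\in\binom Vd}\prod_{j\in D}x_j$ by $\binom Nd N^{-d/k}(1-u)^{d/k}$, using Maclaurin's inequality followed by the power-mean inequality. The matching lower bound comes from a separate explicit vector: a scaled $\lambda_k^{(r)}$-optimizer with $k$-mass $r/k$ on $U$, together with a constant vector of mass $d/k$ on $V$.

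You instead apply Lemma~\ref{lem:symmetrization} with $A=\operatorname{Sym}(V)$ fixing $U$ pointwise. This makes the $V$-factor exactly $\binom Nd c^d$, so the problem becomes an exact reduction that gives both inequalities at once, and Maclaurin is never needed. The homogeneity scaling on $U$ and the one-variable optimization at $a=r/k$ are the same in both arguments.

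The trade-off is that your route relies on the concavity/Jensen argument inside the symmetrization lemma (and on existence of a maximizer). The paper's route uses only classical inequalities and gives a pointwise bound valid for every feasible $x$.
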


\begin{proof}
Let $x\ge0$ with $\sum_i x_i^k=1$, and set
$u=\sum_{i\in U}x_i^k$. The unnormalized objective factors as
\[
 \sum_{F\in\Mcal_{n,k}(\Kcal)}\prod_{i\in F}x_i
 =
 \left(\sum_{T\in\Kcal}\prod_{i\in T}x_i\right)
 \left(\sum_{D\in\binom Vd}\prod_{j\in D}x_j\right).
\]
The first factor is at most
$\lambda_k^{(r)}(\Kcal)u^{r/k}/r!$. For the second, Maclaurin's inequality
together with the power-mean inequality gives
\[
 \sum_{D\in\binom Vd}\prod_{j\in D}x_j
 \le
 \binom Nd N^{-d/k}(1-u)^{d/k}.
\]
Hence the product is at most
\[
 \frac{\lambda_k^{(r)}(\Kcal)}{r!}
 \binom Nd N^{-d/k}
 u^{r/k}(1-u)^{d/k},
\]
which is maximized at $u=r/k$. Multiplying by $k!$ gives the stated upper
bound. Equality is attained by scaling a $k$-norm optimizer for
$\lambda_k^{(r)}(\Kcal)$ to mass $r/k$ on $U$ and taking the constant vector
of mass $d/k$ on $V$.
\end{proof}

Set $\beta_d(N)=d!\binom Nd N^{-d/k}$. Recall that $\theta=1-\frac1k$.
Since $\beta_d(N)=N^{d\theta}
 \prod_{j=0}^{d-1}\left(1-\frac jN\right)$, 
we shall use the simple upper bound
\begin{equation}\label{eq:beta-upper}
 \beta_d(N)\le N^{d\theta}\le n^{d\theta}.
\end{equation}

\begin{lemma}
\label{lem:beta-lower}
If $n\ge2u$ and $n\ge u+d$, then
\begin{equation}\label{eq:beta-lower}
 \beta_d(n-u)\ge n^{d\theta}
 \left(1-\frac{L(u)}n\right)
 \text{ with } L(u)=2d\theta u+d(d-1).
\end{equation}
\end{lemma}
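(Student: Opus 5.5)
We start from the product formula $\beta_d(N)=N^{d\theta}\prod_{j=0}^{d-1}(1-j/N)$ with $N=n-u$, and bound the two factors separately from below. The target is an estimate of the form $n^{d\theta}(1-a)(1-b)$ with $a\le 2d\theta u/n$ and $b\le d(d-1)/n$. After that, $(1-a)(1-b)\ge 1-a-b$ gives \eqref{eq:beta-lower} immediately. Note that if $L(u)\ge n$ the claim is trivial, since $\beta_d\ge0$ when $N\ge d$, which holds because $n\ge u+d$. So we may assume $L(u)<n$ wherever that is convenient.

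\textbf{The power factor.} We write $(n-u)^{d\theta}=n^{d\theta}(1-u/n)^{d\theta}$ and set $x=u/n\in[0,1/2]$, using the hypothesis $n\ge 2u$. We need $(1-x)^{d\theta}\ge 1-2d\theta x$. We use $\log(1-x)\ge -2x$ for $x\in[0,1/2]$. This holds because $\log(1-x)+2x$ is $0$ at $x=0$, has derivative $2-1/(1-x)\ge0$ on $[0,1/2]$, and is therefore nondecreasing there. Hence $(1-x)^{d\theta}\ge e^{-2d\theta x}\ge 1-2d\theta x$. Alternatively, one can argue directly: Bernoulli's inequality gives the bound when $d\theta\ge1$, and when $d\theta<1$ we even have $(1-x)^{d\theta}\ge 1-x\ge 1-2d\theta x$ provided $d\theta\ge 1/2$. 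The logarithmic argument avoids this case split entirely, so we use that.

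\textbf{The falling-factorial factor.} Since $N=n-u\ge n/2$, each $j<d$ satisfies $j/N\le 2j/n$. By the Weierstrass product inequality,
\[
\prod_{j=0}^{d-1}(1-j/N)\ge 1-\sum_{j<d} j/N = 1-\frac{d(d-1)}{2N}\ge 1-\frac{d(d-1)}{n}.
\]
All factors involved are in $[0,1]$, because $N\ge d$ is guaranteed by $n\ge u+d$.

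\textbf{Combining.} Multiplying the two lower bounds, both of which we may take to be nonnegative (otherwise the claim is trivial, as noted at the start), gives
\[
\beta_d(n-u)\ge n^{d\theta}\left(1-\frac{2d\theta u}{n}\right)\left(1-\frac{d(d-1)}{n}\right)\ge n^{d\theta}\left(1-\frac{L(u)}{n}\right).
\]

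The only real care needed is in the power factor: getting the constant $2$ in front of $d\theta u$ uniformly for all exponents $d\theta$, including the range $d\theta<1$, which is exactly where the hypothesis $n\ge 2u$ enters. The remaining steps are routine.
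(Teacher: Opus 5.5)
Your proposal is correct and follows the paper's proof essentially step for step. It uses the same trivial case $L(u)\ge n$, the same factorization $\beta_d(n-u)=n^{d\theta}(1-u/n)^{d\theta}\prod_{j<d}(1-j/N)$, the same Weierstrass product bound with $N\ge n/2$, and the same final multiplication of the two nonnegative lower bounds. The one addition is your justification of $(1-x)^{d\theta}\ge 1-2d\theta x$ for $x\in[0,1/2]$ via $\log(1-x)\ge -2x$, which the paper only asserts.
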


\begin{proof}
The inequality always holds when $L(u)\ge n$. In what follows, we only consider the case for $L(u)< n$. Let $N=n-u$. Then $N\ge n/2$ and $N\ge d$, and hence
\[
 \beta_d(N)
 =
 n^{d\theta}
 \left(1-\frac un\right)^{d\theta}
 \prod_{j=0}^{d-1}\left(1-\frac jN\right).
\]
Since $u/n\le1/2$, we get $
 \left(1-\frac un\right)^{d\theta}
 \ge 1-\frac{2d\theta u}{n}$. Moreover,
\[
 \prod_{j=0}^{d-1}\left(1-\frac jN\right)
 \ge
 1-\sum_{j=0}^{d-1}\frac jN
 =
 1-\frac{d(d-1)}{2N}
 \ge
 1-\frac{d(d-1)}n.
\]
Since $L(u)\le n$, we get $1-\frac{2d\theta u}{n}> 0$ and $1-\frac{d(d-1)}n>0$. Therefore, we get
\[\beta_{d}(N)\ge n^{d\theta}\left(1-\frac{2d\theta u}{n}\right)\left(1-\frac{d(d-1)}n\right)\ge n^{d\theta}\left(1-\frac{L(u)}{n}\right).\]
\end{proof}

\begin{prop}
\label{prop:kernel-radii}
It holds that
\begin{align}
 \lambda_k^{(r)}(\Scal_s^{(r)})
 &=r!r^{-r/k}s^\theta,\label{eq:kernel-star-radius}\\
 \lambda_k^{(r)}(\Tcal^{(r)})
 &=r!(r+1)^{1-r/k}.\label{eq:kernel-top-radius}
\end{align}
Among proper subgraphs of $\Tcal^{(r)}$, the maximum radius is obtained by
deleting one edge and equals
\begin{equation}\label{eq:proper-top}
 r!\,r^{1-(2r-1)/k}(r-1)^{(r-1)/k}.
\end{equation}
\end{prop}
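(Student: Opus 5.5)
The plan is to compute each radius directly from the variational definition of $\lambda_k^{(r)}$. First, I will use Lemma~\ref{lem:symmetrization} to reduce to vectors that are constant on automorphism orbits. Then a one-variable weighted AM--GM (or Lagrange) optimization finishes each case. Lemma~\ref{lem:symmetrization} is stated for $k$-graphs with exponent $m\ge k$. Its proof only uses that each summand $(\prod_{i\in e}y_i)^{1/m}$ has total degree $|e|/m\le 1$. Here we have an $r$-graph with $m=k\ge r$, so the same argument applies verbatim. Likewise, Lemma~\ref{lem:basic} (monotonicity) holds for $\lambda_k^{(r)}$ with the same proof.

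\emph{The star.} The automorphism group of $\Scal_s^{(r)}$ acts transitively on $[t]$ and on $Y$. So we may take $y_i=a$ on $[t]$ and $y_i=b$ on $Y$. The objective becomes $r!\,s\,a^tb$ subject to $ta^k+sb^k=1$. Put $A=ta^k$ and $B=sb^k$, so $A+B=1$. Then
\[
 s\,a^tb=t^{-t/k}s^{\theta}A^{t/k}B^{1/k}.
\]
This is maximized at $A=t/r$, $B=1/r$, giving $A^{t/k}B^{1/k}=t^{t/k}r^{-r/k}$. Hence the maximum is $r!\,r^{-r/k}s^\theta$, which is \eqref{eq:kernel-star-radius}.

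\emph{The complete $r$-graph.} $\Tcal^{(r)}$ is vertex-transitive, so the optimizer is constant: $y_i=c$ with $(r+1)c^k=1$. The value is then $r!(r+1)c^r=r!(r+1)^{1-r/k}$, which is \eqref{eq:kernel-top-radius}.

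\emph{Proper subgraphs of $\Tcal^{(r)}$.} By monotonicity, every proper subgraph is contained in $\Tcal^{(r)}$ minus one edge. All such graphs are isomorphic, so it suffices to compute for $\Tcal^{(r)}\setminus\{[r+1]\setminus\{z\}\}$. Its $r$ edges are $[r+1]\setminus\{w\}$ for $w\neq z$. Each contains $z$ and all but one of the other $r$ vertices. The symmetric group on $[r+1]\setminus\{z\}$ acts by automorphisms, so we take $y_z=a$ and $y_i=b$ otherwise. The objective is $r!\,r\,ab^{r-1}$ subject to $a^k+rb^k=1$. With $A=a^k$ and $B=rb^k$,
\[
 ab^{r-1}=r^{-(r-1)/k}A^{1/k}B^{(r-1)/k}.
\]
This is maximized at $A=1/r$, $B=(r-1)/r$. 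Collecting the powers of $r$ gives
\[
 r!\,r^{1-(2r-1)/k}(r-1)^{(r-1)/k},
\]
which is \eqref{eq:proper-top}.

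\emph{Main obstacle.} The only delicate point is justifying the orbit-constant reduction for $r$-graphs with the $k$-norm. As noted above, this follows because the concavity argument in Lemma~\ref{lem:symmetrization} only requires $r/k\le1$. The remaining steps are routine two-variable optimizations.
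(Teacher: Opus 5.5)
Your proposal is correct and follows essentially the same route as the paper. You take orbit-constant optimizers via Lemma~\ref{lem:symmetrization}, reduce each case to a one-parameter mass split maximized by weighted AM--GM, and handle proper subgraphs by monotonicity plus the isomorphism of all one-edge deletions (your deleted edge $[r+1]\setminus\{z\}$ is just a relabeling of the paper's $E=[r]$). Your explicit remark that the symmetrization lemma applies to $r$-graphs with the $k$-norm because $r/k\le 1$ covers a point the paper uses without comment. You should also say, as the paper does, that isolated vertices of the ambient set can be given zero mass.
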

\begin{proof}
We first compute the radius of the $r$-uniform $t$-star
$\Scal_s^{(r)}$. Without loss of generality, we may assume 
\[
 \Scal_s^{(r)}=\{[t]\cup\{j\}:j\in [t+1,t+s]\}.
\]
Let $x=(x_i)$ be a nonnegative vector with $\sum_i x_i^k=1$. Since vertices outside $[t+s]$ do not appear in any edge of
$\Scal_s^{(r)}$, they make no contribution to the objective, so we may
assume that all mass is supported on $[t+s]$.
By Lemma~\ref{lem:symmetrization}, an optimizer may be taken to be constant
on $[t]$ and on $[t+1,t+s]$. Write these two values as $a$ and $b$,
respectively.
Then $ta^k+sb^k=1$ and
\[
 \lambda_k^{(r)}(\Scal_s^{(r)})
 =
 \max r!s a^t b.
\]
Let $u=ta^k$ be the total $k$-mass on $[t]$.
Then $a^k=\frac{u}{t}$, $b^k=\frac{1-u}{s}$,
and the objective equals
\[
 r!s
 \left(\frac{u}{t}\right)^{t/k}
 \left(\frac{1-u}{s}\right)^{1/k}
 =
 r!t^{-t/k}s^\theta
 u^{t/k}(1-u)^{1/k}.
\]
The factor depending on $u$ is
\[
 f(u)=u^{t/k}(1-u)^{1/k},
 \qquad 0\le u\le 1.
\]
Equivalently, we maximize
\[
 f(u)^k=u^{t}(1-u).
\]
Taking logarithms on $(0,1)$, we get
\[
 \log f(u)^k=t\log u+\log(1-u).
\]
Differentiating gives $\frac{d}{du}\log f(u)^k
 =
 \frac{t}{u}-\frac{1}{1-u}$. Thus the unique critical point is determined by $
 \frac{t}{u}=\frac{1}{1-u}$, which gives $t(1-u)=u$,  so $u=\frac{t}{t+1}$. The function is zero at the endpoints and positive inside $(0,1)$, so this
critical point gives the maximum. Substituting $u=t/(t+1)$, we get
\[
\lambda_k^{(r)}(\Scal_s^{(r)})
=
r!t^{-t/k}s^\theta
\left(\frac{t}{r}\right)^{t/k}
\left(\frac1r\right)^{1/k}
=
r!r^{-r/k}s^\theta.
\]

We next compute $\lambda_k^{(r)}(\Tcal^{(r)})$, where $\Tcal^{(r)}=\binom{[r+1]}r$.
Vertices outside $[r+1]$ again make no contribution to the objective, so
we may assume that all mass is supported on $[r+1]$. Since
$\operatorname{Aut}(\Tcal^{(r)})$ acts transitively on $[r+1]$,
Lemma~\ref{lem:symmetrization} gives an optimizer which is constant on
$[r+1]$. Write this common value as $a$.
Then $(r+1)a^k=1$, and hence $a=(r+1)^{-1/k}$.
Since $\Tcal^{(r)}$ has $r+1$ edges, we obtain
\[
 \lambda_k^{(r)}(\Tcal^{(r)})
 =
 r!(r+1)a^r
 =
 r!(r+1)^{1-r/k}.
\]

It remains to determine the largest value of $\lambda_k^{(r)}$ among proper
subgraphs of $\Tcal^{(r)}$. 
Suppose $\Fcal$ misses
at least one edge $E$ of $\Tcal^{(r)}$, and hence $\Fcal\subseteq \Tcal^{(r)}\setminus\{E\}$. By Lemma \ref{lem:basic}, we get $\lambda_k^{(r)}(\Fcal)
 \le
 \lambda_k^{(r)}(\Tcal^{(r)}\setminus\{E\})$.
The one-edge deletion attains the maximum.

All one-edge deletions of $\Tcal^{(r)}$ are isomorphic, so we suppose without loss that the deleted edge is $E=[r]$. Then the edges of
$\Tcal^{(r)}\setminus\{E\}$ are $[r+1]\setminus\{i\}$ for $1\le  i\le r$.
The automorphism group of this hypergraph is transitive on $[r]$ and fixes
the vertex $r+1$. Thus, by Lemma~\ref{lem:symmetrization}, an optimizer
may be taken to have value $a$ on $[r]$ and value $b$ on $\{r+1\}$.
Then $ra^k+b^k=1$,
and
\[
 \lambda_k^{(r)}(\Tcal^{(r)}\setminus\{E\})
 =
 \max r!\,r a^{r-1}b.
\]
Let $u=ra^k$ be the total $k$-mass on the deleted edge $E$. Then
$a^k=\frac{u}{r}$, $b^k=1-u$,
and the objective equals
\[
 r!\,r
 \left(\frac{u}{r}\right)^{(r-1)/k}
 (1-u)^{1/k}.
\]
The factor depending on $u$ is
\[
 u^{(r-1)/k}(1-u)^{1/k}, \qquad 0\le u\le 1.
\]
Replacing $t$ by $r-1$ in the preceding calculation for $f(u)$ shows that this factor
is maximized at $u=\frac{r-1}{(r-1)+1}=\frac{r-1}{r}$.
Substitution gives
\[
\lambda_k^{(r)}(\Tcal^{(r)}\setminus\{E\})
=
r!\,r
\left(\frac{(r-1)/r}{r}\right)^{(r-1)/k}
\left(\frac1r\right)^{1/k}
=
r!\,r^{1-(2r-1)/k}(r-1)^{(r-1)/k}.
\]
\end{proof}

We now record the constants that will be used to compare the two possible
pure product layers.  Recall that, for a kernel $\Kcal$ on $U$, the
leading term of the corresponding pure layer $\Mcal_{n,k}(\Kcal)$ has the form $C(\Kcal)\beta_d(N)$ where $N=n-|U|$. Thus, when $U$ is fixed, the common factor $\beta_d(N)$ is the same
for all kernels on $U$, and the comparison reduces to comparing the coefficients $C(\Kcal)$.

Set
\begin{equation}\label{eq:P}
 P=\frac{k!}{d!}k^{-1}d^{d/k}.
\end{equation}
For the two kernels $\Scal_q^{(r)}$ and $\Tcal^{(r)}$, these coefficients are respectively
\begin{equation}\label{eq:H0A0}
 H_0=Pq^\theta,
 \qquad
 A_0=P\,r^{r/k}(r+1)^{1-r/k}.
\end{equation}

We shall also need the loss in this coefficient when one edge is deleted
from the full kernel.  For the star $\Scal_{q}^{(r)}$ this loss is
\begin{equation}\label{eq:DeltaS}
 \Delta_S=P\bigl(q^\theta-(q-1)^\theta\bigr),
\end{equation}
and for the complete $r$-graph $\Tcal^{(r)}$ it is
\begin{equation}\label{eq:DeltaT}
 \Delta_T=P\left[
 r^{r/k}(r+1)^{1-r/k}
 -r^{1-(r-1)/k}(r-1)^{(r-1)/k}
 \right].
\end{equation}
Both losses are positive.

These are losses in the coefficient of the same factor $\beta_d(N)$.
Therefore $N=n-|U|$, equivalently $|U|$, must be fixed in each
comparison.  Thus, when comparing the full star with $q$ edges and the
star obtained by deleting one edge, we regard both as $r$-graphs on the
same $(k+1)$-element set $U$; after the deletion, one vertex of $U$
is simply unused.  For the complete $r$-graph comparison, the fixed
ground set has size $r+1$.

To estimate the error terms arising from the complete $(t+2)$-stars in the
covering arguments, we record the following exact formula for the spectral
radius of a full $k$-uniform star.
\begin{lemma}
\label{lem:star-formula}
For $0\le c\le k$, we have
\[
 \lambda(\Scal^k_{n,c})
 =
 k!\binom{n-c}{k-c}k^{-1}
 (k-c)^{(k-c)/k}(n-c)^{-(k-c)/k},
\]
where $0^0$ is interpreted as $1$.
\end{lemma}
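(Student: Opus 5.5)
The plan is to compute $\lambda(\Scal^k_{n,c})$ directly from the variational definition \eqref{eq:lambda-def}. The full star $\Scal^k_{n,c}=\Scal^k_{n,c}(T)$ with $|T|=c$ is exactly the pure product layer $\Mcal_{n,k}(\Kcal)$ for the one-edge $c$-graph $\Kcal=\{T\}$ on $U=T$. The difference from Lemma~\ref{lem:product} is that $r$ is replaced by $c$ and $d$ by $k-c$. So I will repeat the proof of Lemma~\ref{lem:product} in this setting rather than invoke it, since that lemma was stated only for $r=t+1$.

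Fix a nonnegative $x$ with $\sum_i x_i^k=1$, let $u=\sum_{i\in T}x_i^k$, and set $N=n-c$. Then
\[
 \sum_{F\in\Scal^k_{n,c}}\prod_{i\in F}x_i=\Bigl(\prod_{i\in T}x_i\Bigr)\Bigl(\sum_{D\in\binom{[n]\setminus T}{k-c}}\prod_{j\in D}x_j\Bigr).
\]
I will bound the two factors separately.

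\begin{itemize}
\item For the first factor, AM--GM gives $\prod_{i\in T}x_i\le (u/c)^{c/k}$.
\item For the second factor, Maclaurin's inequality followed by the power-mean inequality gives the bound $\binom{N}{k-c}N^{-(k-c)/k}(1-u)^{(k-c)/k}$, exactly as in Lemma~\ref{lem:product}.
\end{itemize}

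Multiplying, the objective is at most
\[
c^{-c/k}\binom{N}{k-c}N^{-(k-c)/k}\,u^{c/k}(1-u)^{(k-c)/k}.
\]
As in the proof of Proposition~\ref{prop:kernel-radii}, the function $u^{c}(1-u)^{k-c}$ is maximized at $u=c/k$. At this point the product $c^{-c/k}u^{c/k}(1-u)^{(k-c)/k}$ equals $k^{-1}(k-c)^{(k-c)/k}$. Multiplying by $k!$ then gives the stated formula as an upper bound.

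For the matching lower bound, I take $x_i=k^{-1/k}$ on $T$ (so that $u=c/k$) and $x_j=((k-c)/(kN))^{1/k}$ on the complement. With this vector both AM--GM and Maclaurin hold with equality, so the bound is attained.

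The only delicate part is the degenerate cases, and I expect these to be the main (mild) obstacle.

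\begin{itemize}
\item When $c=0$, the first factor is empty and the maximum is at $u=0$. The formula reduces to $k!\binom nk n^{-1}$, using $0^0=1$; this is the complete graph value obtained from the constant vector.
\item When $c=k$, there is one edge, the second factor is $1$, and the maximum is at $u=1$, giving $k!\,k^{-1}$. This matches the formula with $(k-c)^{(k-c)/k}=0^0=1$ and $\binom{n-k}{0}=1$.
\end{itemize}

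In both cases the maximization of $u^{c}(1-u)^{k-c}$ degenerates to an endpoint, and the formula should be checked separately there.

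Alternatively, one could invoke Lemma~\ref{lem:symmetrization}: $\operatorname{Aut}(\Scal^k_{n,c})$ has orbits $T$ and $[n]\setminus T$, which reduces the problem to a two-variable optimization that gives the same value.
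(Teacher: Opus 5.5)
Your proof is correct, but it gets the upper bound by a different route from the paper.

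The paper invokes Lemma~\ref{lem:symmetrization} to take an optimizer that is constant on the two orbits $C$ and $[n]\setminus C$. The objective then collapses to $k!\binom{n-c}{k-c}a^cb^{k-c}$, leaving only a one-parameter optimization in $u=ca^k$. The existence of a maximizer and the Jensen/concavity step are handled inside that lemma.

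You instead bound the objective for every feasible vector: AM--GM on the center, and Maclaurin plus power mean on the complement. You then certify sharpness with the explicit vector $x_i=k^{-1/k}$ on $T$ and $x_j=((k-c)/(kN))^{1/k}$ off $T$. This is self-contained. It is exactly the proof of Lemma~\ref{lem:product} with $r$ replaced by $c$ and the one-edge kernel $\{T\}$, for which $\lambda_k^{(c)}(\{T\})=c!\,c^{-c/k}$. That argument never uses $r=t+1$, so you could have cited it rather than repeating it.

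The paper's route is shorter once the symmetrization lemma is available. Yours gives an explicit extremal vector and covers the endpoint cases $c=0$ and $c=k$ with the same inequalities, which become vacuous on one side. Both arguments finish with the same maximization of $u^c(1-u)^{k-c}$ at $u=c/k$.
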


\begin{proof}
Fix the center $C$ of the star, with $|C|=c$. By
Lemma~\ref{lem:symmetrization}, an optimizer may be taken to be constant
on $C$ and on $[n]\setminus C$. Write these two values as $a$ and $b$.
Then $c a^k+(n-c)b^k=1$ and
\[
 \lambda(\Scal^k_{n,c})
 =
 \max k!\binom{n-c}{k-c}a^c b^{k-c}.
\]
Assume first that $0<c<k$. Let $u=c a^k$ be the total $k$-mass on the
center. Then $(n-c)b^k=1-u$, and the objective equals
\[
 k!\binom{n-c}{k-c}
 \left(\frac{u}{c}\right)^{c/k}
 \left(\frac{1-u}{n-c}\right)^{(k-c)/k}.
\]
The factor depending on $u$ is
$u^{c/k}(1-u)^{(k-c)/k}$, which is maximized on $[0,1]$ at
$u=c/k$. Substitution gives the claimed formula.

The endpoint cases are consistent with the same expression: when $c=0$,
the star is the complete $k$-graph on $n$ vertices and the value is
$k!\binom nk/n$; when $c=k$, the star consists of one edge and the value is
$k!/k=(k-1)!$. These are exactly the displayed formula under the convention
$0^0=1$.
\end{proof}

Recall that $d=k-t-1$. Lemma~\ref{lem:star-formula} with $c=r+1=t+2$ yields
\begin{equation}\label{eq:Z-bound}
 \lambda(\Scal^k_{n,r+1})
 \le Z\,n^{(d-1)\theta}
 \text{ with }
 Z\coloneqq \frac{k!}{(d-1)!}\,k^{-1}(d-1)^{(d-1)/k}.
\end{equation}

\section{Proof of Theorem \ref{thm:main}}
\label{sec:proof-main}

We now combine the kernel reduction with the product-layer estimates.  We
recall the notation used below.  The constants $B_0$ and $B_R$ denote
the numbers of full $k$-uniform $(t+2)$-stars supplied by
Proposition~\ref{prop:cover-kernel}.  The constant $Z$ is the coefficient
from \eqref{eq:Z-bound}, while $P$ is defined in \eqref{eq:P}.  Finally,
$H_0$ is the coefficient of the Hilton--Milner pure product layer in
\eqref{eq:H0A0}, and $\Delta_S,\Delta_T$ are the coefficient losses
defined in \eqref{eq:DeltaS} and \eqref{eq:DeltaT}.

The proof below will require several numerical inequalities which allow the
error terms from the kernel reduction to be absorbed into the product-layer
gaps.  We record these consequences of the assumption
$n\ge 100\cdot 2^k k^7$ in the following lemma.

\begin{lemma}
\label{lem:absorption}
For $k\ge3$, $1\le t\le k-2$, and $n\ge100\cdot 2^k k^7$, the following inequalities hold:
\begin{align}
n&\ge 2(k+1),\quad
n\ge 2L(k+1),\quad
n\ge 2L(r+1),
\label{eq:absorption-loss}\\
n&\ge
\frac{2H_0L(k+1)}{\Delta_S},
\label{eq:absorption-star-loss}\\
n^\theta&\ge \frac{4B_0Z}{H_0},\quad
n^\theta\ge \frac{4\bigl(B_R+\binom q2\bigr)Z}{\Delta_S},\quad
n^\theta\ge\frac{4(B_R+1)Z}{\Delta_T}.
\label{eq:absorption-error}
\end{align}
\end{lemma}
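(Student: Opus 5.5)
The plan is to divide every constant by the common factor $P$ from \eqref{eq:P}, so that each inequality compares an explicit power of $n$ with a polynomial in $k$ times at most $2^k$. Since $d=k-t-1\ge1$ and $q=k-t+1\le k$, I will first record the following normalized bounds.
\begin{itemize}
\item $H_0/P=q^\theta\in[1,k]$.
\item $Z/P=d\,(d-1)^{(d-1)/k}d^{-d/k}\le d\le k$, with $0^0=1$ when $d=1$.
\item $B_0=\binom kt k^2\le 2^kk^2$ and $B_R+\binom q2\le rk^2+k^2\le 2k^3$.
\item $L(u)=2d\theta u+d(d-1)$, so $L(k+1)\le 2k(k+1)+k^2\le 4k^2$ for $k\ge3$, and $L(r+1)\le L(k+1)$.
\end{itemize}
With these, \eqref{eq:absorption-loss} is immediate, because $n\ge 100\cdot2^kk^7$ dwarfs $8k^2$.

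The two gap constants need lower bounds, and this is the only step with content. For $\Delta_S$, the mean value theorem gives
\[
\frac{\Delta_S}{P}=q^\theta-(q-1)^\theta\ge\theta q^{\theta-1}\ge\frac{\theta}{q}\ge\frac{2}{3k}.
\]
For $\Delta_T$, I will factor out $r$ and rewrite \eqref{eq:DeltaT} as
\[
\frac{\Delta_T}{P}=r\Bigl[\bigl(1+\tfrac1r\bigr)^{(k-r)/k}-\bigl(1-\tfrac1r\bigr)^{(r-1)/k}\Bigr].
\]
Both exponents lie in $[0,1]$. I will use $(1+x)^a\ge 1+\frac{ax}{1+x}$ and $(1-x)^b\le 1-bx$ for $a,b\in[0,1]$ and $0\le x\le1$. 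Since $k-r=d+1-1\ge 1$ and $r\ge2$, this yields
\[
\frac{\Delta_T}{P}\ge r\Bigl[\frac{1}{2rk}+\frac{r-1}{rk}\Bigr]\ge\frac1{2k}.
\]
Getting these elementary Bernoulli-type estimates in the right direction is the step I expect to need the most care, since the two terms in $\Delta_T$ are close together.

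Next comes the linear-in-$n$ inequality \eqref{eq:absorption-star-loss}. Its right-hand side satisfies
\[
\frac{2H_0L(k+1)}{\Delta_S}\le 2k\cdot4k^2\cdot\frac{3k}{2}=12k^4,
\]
which is far below $n$.

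For \eqref{eq:absorption-error}, $n^\theta$ is increasing in $n$, so it suffices to treat $n_0=100\cdot2^kk^7$. Writing $n_0^\theta=n_0/n_0^{1/k}$, we have
\[
n_0^{1/k}=2\,(100k^7)^{1/k}\le 2\cdot100^{1/3}\cdot\max_{k\ge3}k^{7/k}\le 2\cdot 4.7\cdot 13<125,
\]
where $k^{7/k}$ is decreasing for $k\ge3$. Hence $n^\theta\ge 0.8\cdot2^kk^7$. The three right-hand sides are bounded as follows.
\begin{itemize}
\item $4B_0Z/H_0\le 4\cdot2^kk^2\cdot k=4\cdot2^kk^3$.
\item $4(B_R+\binom q2)Z/\Delta_S\le 4\cdot2k^3\cdot k\cdot\frac{3k}{2}=12k^5$.
\item $4(B_R+1)Z/\Delta_T\le 4(k^3+1)\cdot k\cdot 2k\le16k^5$.
\end{itemize}
Each of these is comfortably below $0.8\cdot 2^kk^7$, which completes the verification.
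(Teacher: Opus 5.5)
Your proposal is correct and follows essentially the same route as the paper. Both arguments normalize by $P$, use $Z/P\le k$ and $1\le H_0/P\le k$, apply the mean-value bound to $\Delta_S$ and a Bernoulli/concavity bound to $\Delta_T$, bound $B_0$, $B_R$, $\binom q2$ and $L$ crudely, and compare with a lower bound for $n_0^\theta$. The differences are only in constants: the paper simply uses $(1+\tfrac1r)^{1-r/k}\ge 1$ to get $\Delta_T/P\ge (r-1)/k\ge 1/k$, and it bounds $n_0^\theta\ge 8\cdot 2^k k^4$ rather than your $0.8\cdot 2^k k^7$; none of this affects the conclusion.
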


\begin{proof}
We first collect some elementary estimates.  From the definitions of $Z$
and $P$, we get
\[
 \frac ZP\le d\le k.
\]
Since $H_0=Pq^\theta$ and $q\le k$, we have $H_0\le Pk$. On the other hand, $q\ge1$, so $H_0\ge P$.

By the mean-value theorem, we get
\[
 q^\theta-(q-1)^\theta
 =\theta \xi^{\theta-1}
\]
for some $\xi\in(q-1,q)$.  Since $\theta-1=-1/k$, this gives $q^\theta-(q-1)^\theta
 \ge \theta q^{-1/k}$. Hence, we obtain
\[
 \Delta_S
 =P\bigl(q^\theta-(q-1)^\theta\bigr)
 \ge P\theta q^{-1/k}
 \ge \frac Pk,
\]
where the last inequality uses $q\le k$ and $k\ge3$.

Similarly, using the expression for $\Delta_T$ in \eqref{eq:DeltaT}, we get
\[
 \frac{\Delta_T}{P}
 =r\left[
 \left(1+\frac1r\right)^{1-r/k}
 -\left(1-\frac1r\right)^{(r-1)/k}
 \right].
\]
Since $0\le 1-r/k\le1$, we have
\[
 \left(1+\frac1r\right)^{1-r/k}\ge1.
\]
Also, since $0\le (r-1)/k\le1$, Bernoulli's inequality
and concavity give
\[
 \left(1-\frac1r\right)^{(r-1)/k}
 \le 1-\frac{r-1}{kr}.
\]
Therefore,
\[
 \frac{\Delta_T}{P}
 \ge r\left[1-\left(1-\frac{r-1}{kr}\right)\right]
 =\frac{r-1}{k}\ge\frac1k.
\]

The covering numbers from Proposition~\ref{prop:cover-kernel} satisfy
\[
 B_0=\binom kt k^2\le2^k k^2,
 \qquad
 B_R=rk^2\le k^3,
 \qquad
 \binom q2\le\frac{k^2}{2}.
\]
Moreover, since $L(u)=2d\theta u+d(d-1)$ and $d\le k$, $\theta\le1$, we have $L(u)\le 2ku+k^2$. Thus, as $r+1\le k+1$, we get
\[
 L(k+1)\le4k^2,
 \qquad
 L(r+1)\le4k^2.
\]

Put $n_0=100\cdot 2^k k^7$. Since $\theta=(k-1)/k\ge2/3$, we have $n_0^\theta
 =100^\theta 2^{k-1}k^{7-7/k}
 \ge8\cdot 2^k k^4$.
The estimates above imply
\[
 \frac{4B_0Z}{H_0}
 \le 4\cdot 2^k k^2\cdot k
 =4\cdot 2^k k^3,
\]
and
\[
 \frac{2H_0L(k+1)}{\Delta_S}
 \le
 2\cdot k\cdot 4k^2\cdot k
 =8k^4.
\]
Similarly,
\[
 \frac{4\bigl(B_R+\binom q2\bigr)Z}{\Delta_S}
 \le
 4\left(k^3+\frac{k^2}{2}\right)k^2
 \le 8k^5,
\]
and
\[
 \frac{4(B_R+1)Z}{\Delta_T}
 \le
 4(k^3+1)k^2
 \le 8k^5.
\]
Since $2^k\ge k$ for $k\ge3$, we have
\[
 n_0^\theta\ge8\,2^k k^4\ge8k^5.
\]
Also, since $n_0\ge8k^4$, it is clear that
\[
 n_0\ge 2(k+1),\qquad
 n_0\ge 8k^2\ge 2L(k+1),\qquad
 n_0\ge 8k^2\ge 2L(r+1).
\]
Therefore, for every $n\ge n_0$, all inequalities
\eqref{eq:absorption-loss}--\eqref{eq:absorption-error} hold.
\end{proof}

\begin{proof}[\bf Proof of Theorem \ref{thm:main}.]
Since $\lambda=(k-1)!\rho$, it suffices to compare the
$\lambda$-spectral radii.  Extend $\Fcal$ to a maximal nontrivial
$t$-intersecting family $\Gcal$.  Monotonicity from
Lemma~\ref{lem:basic} gives
\[
 \lambda(\Fcal)\le\lambda(\Gcal).
\]

We distinguish two cases according to the $t$-covering number of
$\Gcal$. Suppose first that $\tau_t(\Gcal)\ge t+2$.  By
Proposition~\ref{prop:cover-kernel}, the family $\Gcal$ is covered by
$B_0$ complete $(t+2)$-stars.  Hence Lemma~\ref{lem:basic} and
\eqref{eq:Z-bound} imply
\[
 \lambda(\Gcal)
 \le B_0Z\,n^{(d-1)\theta}.
\]
On the other hand, we may lower-bound
$\lambda(\Hcal_{n,k,t})$ by retaining only its pure product layer.
The corresponding kernel is the full star with $q$ edges.  Since
$d\le k$ and $n\ge2(k+1)$, Lemmas~\ref{lem:product} and
\ref{lem:beta-lower}, applied with $u=k+1$, give
\[
 \lambda(\Hcal_{n,k,t})
 \ge
 H_0\left(1-\frac{L(k+1)}n\right)n^{d\theta}.
\]
By \eqref{eq:absorption-loss}, we get $1-\frac{L(k+1)}n\ge\frac12$, and therefore $\lambda(\Hcal_{n,k,t})
 \ge \frac{H_0}{2}n^{d\theta}$. Moreover, by the first inequality in \eqref{eq:absorption-error}, we get
\[
 B_0Z\,n^{(d-1)\theta}
 \le \frac{H_0}{4}n^{d\theta}.
\]
Thus, we obtain
\[
 \lambda(\Gcal)
 \le \frac{H_0}{4}n^{d\theta}
 < \frac{H_0}{2}n^{d\theta}
 \le \lambda(\Hcal_{n,k,t}).
\]
Hence, this case cannot give an extremal family.

It remains to consider the case $\tau_t(\Gcal)=t+1$.  Let $\Kcal$
be the kernel of $\Gcal$, as defined in
Section~\ref{sec:cover-kernel}.  By
Proposition~\ref{prop:cover-kernel}, either
\[
 \Kcal\subseteq \{X\cup\{y\}:y\in Y\}
 \quad\text{or}\quad
 \Kcal\subseteq \binom{Z}{r}.
\]
If $\Kcal=\{X\cup\{y\}:y\in Y\}$, then
$\Gcal=\Hcal_{n,k,t}$; if $\Kcal=\binom{Z}{r}$, then
$\Gcal=\Acal_{n,k,t}$.  Thus it remains only to show that no proper
subkernel can be extremal.

First suppose that $\Kcal$ is a proper subkernel of the full star with
$q$ edges.  Write the full star as
\[
 \{X\cup\{y\}:y\in Y\},
 \qquad |X|=t,\qquad |Y|=q,
\]
and take $U=X\cup Y$ as the kernel support in Lemma~\ref{lem:product}.  Then $|U|=k+1$, so
the pure product layers of $\Gcal$ and $\Hcal_{n,k,t}$ use the same
exterior vertex set $[n]\setminus U$.

Since $\Kcal$ is proper, it has at most $q-1$ edges.  By the
coefficient loss \eqref{eq:DeltaS} and the product formula, the pure
product layer generated by $\Kcal$ has spectral radius at most
\[
 (H_0-\Delta_S)\beta_d(n-k-1)
 \le (H_0-\Delta_S)n^{d\theta},
\]
where the last inequality follows from \eqref{eq:beta-upper}.

It remains to bound the edges outside this pure layer.  Every edge of
$\Lcal_n(\Kcal)$ that is not in the pure layer contains at least two
vertices of $Y$.  Hence these edges are contained in the union of the
$\binom q2$ complete $(t+2)$-stars with centers
\[
 X\cup\{y,y'\},
 \qquad \{y,y'\}\in\binom Y2.
\]
Moreover, by Proposition~\ref{prop:cover-kernel}, 
$\Gcal\setminus\Lcal_n(\Kcal)$ is covered by at most $B_R$ complete
$(t+2)$-stars.  Therefore, we get
\[
 \lambda(\Gcal)
 \le
 (H_0-\Delta_S)n^{d\theta}
 +\left(B_R+\binom q2\right)Z\,n^{(d-1)\theta}.
\]

On the other hand, the full pure product layer of $\Hcal_{n,k,t}$
gives
\[
 \lambda(\Hcal_{n,k,t})
 \ge
 \left(H_0-\frac{H_0L(k+1)}n\right)n^{d\theta}.
\]
By \eqref{eq:absorption-star-loss}, we get
\[
 \frac{H_0L(k+1)}n\le\frac{\Delta_S}{2},
\]
and by \eqref{eq:absorption-error}, we get
\[
 \left(B_R+\binom q2\right)Z\,n^{(d-1)\theta}
 \le\frac{\Delta_S}{4}n^{d\theta}.
\]
Thus, we obtain
\[
 \lambda(\Gcal)
 \le\left(H_0-\frac{3\Delta_S}{4}\right)n^{d\theta}
 <
 \left(H_0-\frac{\Delta_S}{2}\right)n^{d\theta}
 \le\lambda(\Hcal_{n,k,t}).
\]
Hence no proper subkernel of the full star with $q$ edges can be
extremal.

Finally, suppose that $\Kcal$ is a proper subgraph of the complete
$r$-graph $\Tcal^{(r)}$ on an $(r+1)$-element set $Z$. Recall that 
$\Mcal_{n,k}(\Kcal)$ is the pure product layer generated by $\Kcal$.
Since $\Kcal$ is proper, at least one edge is missing. Hence the loss
estimate gives
\[
 \lambda(\Mcal_{n,k}(\Kcal))
 \le
 \lambda\!\left(\Mcal_{n,k}(\Tcal^{(r)})\right)
 -\frac{\Delta_T}{2}n^{d\theta}.
\]
On the other hand, by Proposition~\ref{prop:cover-kernel}, the edges of $\Gcal$ outside $\Mcal_{n,k}(\Kcal)$
contribute at most
\[
 (B_R+1)Z\,n^{(d-1)\theta}
 \le \frac{\Delta_T}{4}n^{d\theta}.
\]
Therefore, by subadditivity and monotonicity of $\lambda$, we get
\[
\begin{aligned}
 \lambda(\Gcal)
 &\le \lambda(\Mcal_{n,k}(\Kcal))
 +(B_R+1)Z\,n^{(d-1)\theta} \\
 &\le
 \lambda\!\left(\Mcal_{n,k}\!\left(\Tcal^{(r)}\right)\right)
 -\frac{\Delta_T}{2}n^{d\theta}
 +\frac{\Delta_T}{4}n^{d\theta} \\
 &<
 \lambda\!\left(\Mcal_{n,k}\!\left(\Tcal^{(r)}\right)\right)\le
 \lambda(\Acal_{n,k,t}).
\end{aligned}
\]
Thus this case cannot give an extremal family.

It remains only to justify the equality case. The two-section of
$\Acal_{n,k,t}$ is connected. Indeed, every vertex outside $[t+2]$
appears in an edge together with a $(t+1)$-subset of $[t+2]$, while
the core $[t+2]$ is itself connected. Similarly, the two-section of
$\Hcal_{n,k,t}$ is connected: the core $[k+1]$ is connected through
the pure product layer, and every exterior vertex appears in a pure-layer
edge meeting this core. Hence, by the strict monotonicity in
Lemma~\ref{lem:strict-monotonicity}, every proper subfamily of either
candidate has strictly smaller spectral radius. Therefore equality holds
only for the corresponding full candidate family.

We finish the proof by comparing the two candidates asymptotically. The
pure product layer of $\Hcal_{n,k,t}$ has kernel equal to the full star
with $q$ edges, and all remaining edges contribute only
$O_{k,t}(n^{d-1})$ edges. Similarly, the pure product layer of
$\Acal_{n,k,t}$ has kernel equal to the complete $r$-graph on an
$(r+1)$-element set, and its remaining edges also number
$O_{k,t}(n^{d-1})$. Therefore, by Lemmas~\ref{lem:edge-bound} and
\ref{lem:beta-lower},
\[
 \lambda(\Hcal_{n,k,t})
 =
 \bigl(H_0+o(1)\bigr)n^{d\theta},
 \qquad
 \lambda(\Acal_{n,k,t})
 =
 \bigl(A_0+o(1)\bigr)n^{d\theta},
\]
as $n\to\infty$, with $k,t$ fixed.

Using \eqref{eq:H0A0}, and recalling that
$r=t+1$, $q=k-t+1$, and $d=k-r=k-t-1$, we get
\[
 \left(\frac{A_0}{H_0}\right)^k
 =
 \frac{r^r(r+1)^d}{q^{k-1}}
 =
 \frac{(t+1)^{t+1}(t+2)^{k-t-1}}
 {(k-t+1)^{k-1}}.
\]
Thus, we get $\frac{A_0}{H_0}=\exp\!\left(\frac{\Phi_t(k)}{k}\right)$, where
\begin{equation}
\label{eq:Phi}
 \Phi_t(x)=
 (x-t-1)\log(t+2)
 +(t+1)\log(t+1)
 -(x-1)\log(x-t+1),
 \qquad x>t+1.
\end{equation}
Consequently, we have
\begin{equation}\label{eq:ratio-asymptotic}
 \frac{\lambda(\Acal_{n,k,t})}
 {\lambda(\Hcal_{n,k,t})}
 =
 \exp\!\left(\frac{\Phi_t(k)}{k}\right)+o(1)
 \qquad(n\to\infty).
\end{equation}

It remains to determine the sign of $\Phi_t(k)$. We first show that
$\Phi_t$ is strictly decreasing on $[t+2,\infty)$. Put
$y=x-t+1$. Then $y\ge3$, and
\[
 \Phi_t'(x)
 =
 \log\frac{t+2}{y}
 -1-\frac{t-2}{y}
 \eqqcolon g_t(y).
\]
Moreover, we get
\[
 g_t'(y)
 =
 \frac{t-2-y}{y^2}.
\]
If $t\le5$, then $t-2\le3$, so $g_t$ is decreasing on
$[3,\infty)$. Hence
\[
 g_t(y)\le g_t(3)
 =
 \log\frac{t+2}{3}-\frac{t+1}{3}.
\]
By $\log z\le z-1$, we get
\[
 g_t(3)
 \le
 \frac{t-1}{3}-\frac{t+1}{3}
 =
 -\frac23<0.
\]
Thus $g_t(y)<0$ for all $y\ge3$.

If $t\ge6$, then $g_t$ increases on $[3,t-2]$ and decreases on
$[t-2,\infty)$. Therefore its maximum occurs at $y=t-2$, and
\[
 g_t(y)\le g_t(t-2)
 =
 \log\frac{t+2}{t-2}-2
 \le
 \log 2-2<0.
\]
Thus, in all cases, we get $
 \Phi_t'(x)<0$ for every $x\ge t+2$.

Next,
\[
 \Phi_t(t+2)
 =
 \log(t+2)+(t+1)\log\frac{t+1}{3}.
\]
This is positive for all $t\ge1$. In fact, for $t=1$ it equals
$\log(4/3)>0$, for $t=2$ it equals $\log4>0$, and for
$t\ge3$ both terms are nonnegative and the first is positive. Also,
\[
 \Phi_t(x)\to -\infty
 \qquad\text{as }x\to\infty,
\]
since the negative term $-(x-1)\log(x-t+1)$ dominates the linear term
$(x-t-1)\log(t+2)$. Hence $\Phi_t$ has a unique zero, denoted
$\kappa_t$. Equivalently, $\kappa_t$ is the unique solution of
\eqref{eq:transition-equation}.

By \eqref{eq:ratio-asymptotic}, $\Acal_{n,k,t}$ asymptotically has
larger spectral radius than $\Hcal_{n,k,t}$ when
$\Phi_t(k)>0$, that is, when $t+2\le k<\kappa_t$. Conversely,
$\Hcal_{n,k,t}$ asymptotically has larger spectral radius when
$\Phi_t(k)<0$, that is, when $k>\kappa_t$.

Finally, a $p$-adic valuation comparison shows that
$\kappa_t\notin\mathbb Z$ for every $t$; see
Appendix~\ref{appendix} for the details. Therefore no integer value of
$k$ gives an asymptotic tie between the two candidates. This completes
the proof.
\end{proof}

\begin{remark}
Table~\ref{tab:transition} gives numerical approximations to the first few
transition points $\kappa_t$. Since $\kappa_t\notin\mathbb Z$, there is
no tie at an integer value of $k$: the Frankl family has larger asymptotic
spectral radius for $t+2\le k\le \lfloor \kappa_t\rfloor$, while the
Hilton--Milner family has larger asymptotic spectral radius for
$k\ge \lceil \kappa_t\rceil$.
\end{remark}

\begin{table}[htbp]
\centering
\caption{The first transition values $\kappa_t$.}
\label{tab:transition}
\begin{tabular}{c@{\qquad}c@{\qquad}c@{\qquad}c@{\qquad}c@{\qquad}c}
\hline
$t$ & $\kappa_t$ & & $t$ & $\kappa_t$\\
\hline
1 & 3.384653 & &8 & 17.837590\\
2 & 5.493935 & &9 & 19.875060\\
3 & 7.575890 & &10 & 21.909618\\
4 & 9.643184 & &11 & 23.941705\\
5 & 11.700799 & &12 & 25.971667\\
6 & 13.751401 & &13 & 27.999777\\
7 & 15.796631 & &14 & 30.026261\\
\hline
\end{tabular}
\end{table}

We also record the asymptotic location of the transition point. Let
$x=2t+c$. For $c=O(\log t)$, Taylor expansion in the definition
\eqref{eq:Phi} gives, uniformly in this range,
\[
 \Phi_t(2t+c)
 =
 \log t+1-2c
 +O\!\left(\frac{c^2+1}{t}\right).
\]
In particular, taking $c=0$ and $c=\log t$ shows that
$\Phi_t(2t)>0$ and $\Phi_t(2t+\log t)<0$ for all sufficiently large
$t$. Since $\Phi_t$ is strictly decreasing on $[t+2,\infty)$, this
implies $\kappa_t=2t+O(\log t)$.

Now put $c_0=\frac12\log t+\frac12$. The same expansion gives
\[
 \Phi_t(2t+c_0)
 =
 O\!\left(\frac{(\log t)^2}{t}\right).
\]
Moreover, from
\[
 \Phi_t'(x)
 =
 \log\frac{t+2}{x-t+1}
 -1-\frac{t-2}{x-t+1},
\]
we have, uniformly for $x=2t+O(\log t)$,
\[
 \Phi_t'(x)=-2+O\!\left(\frac{\log t}{t}\right).
\]
Thus $\Phi_t'$ is bounded away from $0$ in a neighborhood of
$2t+c_0$. By the mean-value theorem, we get
\[
 \kappa_t-(2t+c_0)
 =
 O\!\left(\frac{(\log t)^2}{t}\right).
\]
Therefore, we obtain
\begin{equation}\label{eq:kappa-asymptotic}
 \kappa_t
 =
 2t+\frac12\log t+\frac12
 +O\!\left(\frac{(\log t)^2}{t}\right).
\end{equation}

We conclude this section with a finite-$n$ comparison of the two
candidate families. For the Hilton--Milner family and the Frankl family,
respectively, the numbers of vertices outside the kernel supports are
\[
 N_H=n-(q+t)=n-k-1,\qquad N_A=n-(r+1)=n-t-2.
\]

The automorphism group of $\Hcal_{n,k,t}$ has three vertex classes:
$X$, $Y$, and $[n]\setminus(X\cup Y)$. By Lemma
\ref{lem:symmetrization}, a maximizing vector may be taken to be constant
on these classes. Write the corresponding coordinates as $a,b,c$.
Counting an edge by the number $j$ of its vertices in $Y$ gives
\begin{align}
 \rho(\Hcal_{n,k,t})
 =k\max\Bigg\{&
 a^t\sum_{j=1}^{q-1}
 \binom qj\binom{N_H}{q-1-j}b^jc^{q-1-j}
 +t a^{t-1}b^q:\notag\\[-1mm]
 &a,b,c\ge0,\quad ta^k+qb^k+N_Hc^k=1\Bigg\}.
 \label{eq:LambdaH}
\end{align}

Similarly, the automorphism group of $\Acal_{n,k,t}$ has two vertex
classes: $Z$ and its complement. An edge contains either exactly
$r=t+1$ vertices of $Z$, or all $r+1=t+2$ vertices of $Z$.
Thus
\begin{align}
 \rho(\Acal_{n,k,t})
 =k\max\Bigg\{&
 (t+2)\binom{N_A}{d}a^{t+1}c^d
 +\binom{N_A}{d-1}a^{t+2}c^{d-1}:\notag\\[-1mm]
 &a,c\ge0,\quad (t+2)a^k+N_Ac^k=1\Bigg\}.
 \label{eq:LambdaA}
\end{align}
Hence, for fixed $t$ and $k$, \eqref{eq:LambdaH} and
\eqref{eq:LambdaA} reduce the exact comparison to a low-dimensional
optimization problem.

For a simpler sufficient comparison, set
\begin{equation}\label{eq:pure-values}
 M_H=H_0\beta_d(n-k-1),\qquad
 M_A=A_0\beta_d(n-t-2),\qquad
 B_S=\binom q2.
\end{equation}

\begin{prop}
\label{prop:certificates}
Let \(M_H\), \(M_A\), and \(B_S\) be defined by \eqref{eq:pure-values}.
If $M_H>M_A+Z n^{(d-1)\theta}$, then $\rho(\Hcal_{n,k,t})>\rho(\Acal_{n,k,t})$; 
if $M_A>M_H+B_SZ n^{(d-1)\theta}+(k!t)^\theta$, then $\rho(\Acal_{n,k,t})>\rho(\Hcal_{n,k,t})$.
\end{prop}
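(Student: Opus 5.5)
We work throughout with $\lambda=(k-1)!\rho$. Since the two families are compared using the same positive factor $(k-1)!$, it suffices to prove each implication with $\rho$ replaced by $\lambda$. The strategy is a sandwich. Each candidate is split into its pure product layer plus a small remainder. Lemma~\ref{lem:basic} (monotonicity) gives a lower bound by the pure layer alone, and Lemma~\ref{lem:basic} (subadditivity), together with the star estimate \eqref{eq:Z-bound} and the edge bound of Lemma~\ref{lem:edge-bound}, gives an upper bound for the remainder.

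The first step is to identify $M_H$ and $M_A$ as exact values of pure layers. For $\Hcal_{n,k,t}$, take $U=[k+1]$ with kernel $\Scal_q^{(r)}$ (center $X=[t]$, leaves $Y=[t+1,k+1]$). Substituting \eqref{eq:kernel-star-radius} into Lemma~\ref{lem:product} gives
\[
\lambda(\Mcal_{n,k}(\Scal_q^{(r)}))=\frac{k!}{d!}k^{-1}d^{d/k}q^\theta\,\beta_d(n-k-1)=H_0\beta_d(n-k-1)=M_H .
\]
For $\Acal_{n,k,t}$, take $U=Z=[t+2]$ with kernel $\Tcal^{(r)}$. Using \eqref{eq:kernel-top-radius} in the same way gives $\lambda(\Mcal_{n,k}(\Tcal^{(r)}))=A_0\beta_d(n-t-2)=M_A$. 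Both pure layers are subfamilies of the respective candidates, so Lemma~\ref{lem:basic} yields $\lambda(\Hcal_{n,k,t})\ge M_H$ and $\lambda(\Acal_{n,k,t})\ge M_A$.

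The second step bounds the remainders from above.

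\emph{Frankl family.} An edge of $\Acal_{n,k,t}$ outside the pure layer meets $Z$ in all $t+2$ vertices, so it lies in the single full $(t+2)$-star centered at $Z$. Subadditivity and \eqref{eq:Z-bound} then give
\[
\lambda(\Acal_{n,k,t})\le M_A+Zn^{(d-1)\theta}.
\]

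\emph{Hilton--Milner family.} An edge of $\Hcal_{n,k,t}$ outside the pure layer is of one of two kinds:
\begin{itemize}
\item[(1)] an edge of the lift that contains $X$ and at least two vertices of $Y$; these lie in the $B_S=\binom q2$ full $(t+2)$-stars centered at $X\cup\{y,y'\}$;
\item[(2)] one of the $t$ exceptional sets $[k+1]\setminus\{j\}$.
\end{itemize}
The stars in (1) contribute at most $B_SZn^{(d-1)\theta}$ by \eqref{eq:Z-bound}. The $t$-edge family in (2) contributes at most $(k!\,t)^\theta$ by Lemma~\ref{lem:edge-bound}. Hence
\[
\lambda(\Hcal_{n,k,t})\le M_H+B_SZn^{(d-1)\theta}+(k!\,t)^\theta .
\]

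Combining the two steps gives both implications.
\begin{itemize}
\item[(i)] If $M_H>M_A+Zn^{(d-1)\theta}$, then $\lambda(\Hcal_{n,k,t})\ge M_H>M_A+Zn^{(d-1)\theta}\ge\lambda(\Acal_{n,k,t})$.
\item[(ii)] If $M_A>M_H+B_SZn^{(d-1)\theta}+(k!\,t)^\theta$, then $\lambda(\Acal_{n,k,t})\ge M_A>\lambda(\Hcal_{n,k,t})$ by the same chain.
\end{itemize}

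The only point needing care is the decomposition of $\Hcal_{n,k,t}$ in the second step. One must check that every lift edge with exactly one vertex of $Y$ has its remaining $d$ vertices outside $U=X\cup Y$. This holds because such an edge contains $X$, exactly one $y\in Y$, and $k-t-1=d$ further vertices, none in $U$. So these edges are exactly the pure layer of Lemma~\ref{lem:product} with $N=n-k-1$. This in turn requires the exterior set in the product formula to be $[n]\setminus U$ with $|U|=k+1$, rather than the support of the kernel.
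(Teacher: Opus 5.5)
Your proposal is correct and follows the paper's proof essentially step for step. You get the lower bounds $\lambda(\Hcal_{n,k,t})\ge M_H$ and $\lambda(\Acal_{n,k,t})\ge M_A$ from the pure product layers, and the upper bounds by subadditivity, using the single $(t+2)$-star centered at $Z$ for $\Acal_{n,k,t}$, and the $B_S=\binom q2$ stars plus the $t$ exceptional edges (via Lemma~\ref{lem:edge-bound}) for $\Hcal_{n,k,t}$. Your explicit check that $M_H$ and $M_A$ equal the pure-layer radii is a detail the paper leaves implicit. Your closing caveat is harmless but moot, since the support of $\Scal_q^{(r)}$ is already $X\cup Y$ with $|X\cup Y|=k+1$.
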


\begin{proof}
Since $\lambda=(k-1)!\rho$, it suffices to prove the corresponding
strict comparisons for $\lambda$. The quantities $M_H$, $M_A$, and
the error terms below are all on the $\lambda$-scale.

Each family contains its pure product layer, and therefore
\[
 \lambda(\Hcal_{n,k,t})\ge M_H,
 \qquad
 \lambda(\Acal_{n,k,t})\ge M_A.
\]

We first upper-bound the part of $\Acal_{n,k,t}$ outside its pure layer.
The only such edges contain all $t+2$ vertices of $Z$; they form one
complete $(t+2)$-star. By Lemma \ref{lem:basic} and
\eqref{eq:Z-bound}, their total contribution is at most
$Zn^{(d-1)\theta}$. Hence, $\lambda(\Acal_{n,k,t})
 \le M_A+Zn^{(d-1)\theta}$. Thus, if $M_H>M_A+Zn^{(d-1)\theta}$,
then
\[
 \lambda(\Hcal_{n,k,t})
 \ge M_H
 > M_A+Zn^{(d-1)\theta}
 \ge \lambda(\Acal_{n,k,t}),
\]
and so $\rho(\Hcal_{n,k,t})>\rho(\Acal_{n,k,t})$.

Next we upper-bound the part of $\Hcal_{n,k,t}$ outside its pure layer.
Every remaining lift edge contains at least two vertices of $Y$. Choosing
such a pair shows that these edges are covered by $B_S=\binom q2$ complete $(t+2)$-stars. Again by Lemma \ref{lem:basic} and
\eqref{eq:Z-bound}, their total contribution is at most $B_SZn^{(d-1)\theta}$. The remaining $t$ exceptional edges contribute at most $(k!t)^\theta$ 
by Lemma \ref{lem:edge-bound}. Therefore
\[
 \lambda(\Hcal_{n,k,t})
 \le M_H+B_SZn^{(d-1)\theta}+(k!t)^\theta.
\]
Hence, if $M_A>M_H+B_SZn^{(d-1)\theta}+(k!t)^\theta$, then
\[
 \lambda(\Acal_{n,k,t})
 \ge M_A
 > M_H+B_SZn^{(d-1)\theta}+(k!t)^\theta
 \ge \lambda(\Hcal_{n,k,t}),
\]
and consequently $\rho(\Acal_{n,k,t})>\rho(\Hcal_{n,k,t})$.
This proves both sufficient conditions.
\end{proof}

\section{Concluding remarks}

The explicit threshold \(100\cdot 2^k k^7\) is not meant to be optimal. The main
losses in the proof come from summing the spectral radii of overlapping
complete stars and from replacing exact product factors by uniform linear
bounds. A more refined argument that takes the Perron vector into account,
possibly combined with shifting or branching methods as in recent work on
the spectral Erd\H{o}s matching problem \cite{KangLuYuanZhou}, could reduce
the required ground-set size.

The phase equation \eqref{eq:transition-equation} reveals a phenomenon absent
from the ordinary cardinality comparison. Asymptotically, the first Frankl
family remains spectrally larger than the Hilton--Milner family for a
logarithmically wide range beyond \(k=2t+1\). It would be interesting to know
whether the same transition rule holds for finite \(n\) throughout the range
of Theorem \ref{thm:main}; that is, whether the sign of \(\Phi_t(k)\) always
determines the exact comparison between \(\Acal_{n,k,t}\) and
\(\Hcal_{n,k,t}\). Proposition \ref{prop:certificates} reduces this to an
explicit finite-\(n\) comparison near the phase boundary.

\appendix

\section{Non-integrality of the solution of \eqref{eq:transition-equation}}
\label{appendix}

In this appendix we prove that the solution \(\kappa_t\) of
\eqref{eq:transition-equation} is not an integer.

Suppose, for a contradiction, that \(x=\kappa_t\in \mathbb Z\), and set $y=x-t+1$. Then \eqref{eq:transition-equation} becomes
\begin{equation}
\label{eq:prime-comparison}
  (t+2)^{y-2}(t+1)^{t+1}=y^{y+t-2},
  \qquad y>2 .
\end{equation}
Since \(t+1\) and \(t+2\) are coprime and both are greater than \(1\), the
left-hand side of \eqref{eq:prime-comparison} has at least two distinct prime
divisors. Hence \(y\) has at least two distinct prime divisors. In particular,
\(y\) is composite, and so \(y\ge 4\).

Let \(p\mid t+2\). Since \(p\nmid t+1\), comparing \(p\)-adic valuations in
\eqref{eq:prime-comparison} gives
\[
  (y-2)v_p(t+2)=(y+t-2)v_p(y).
\]
Equivalently,
\[
  v_p(y)=\frac{y-2}{y+t-2}\,v_p(t+2).
\]
Write
\[
  \frac{y-2}{y+t-2}=\frac{a}{b},
  \qquad \gcd(a,b)=1.
\]
Since \(t\ge 1\), we have \(0<a/b<1\), and hence \(b>1\). The equality above
implies that
\[
  v_p(y)=\frac{a}{b}v_p(t+2).
\]
Thus \(b\mid v_p(t+2)\) for every prime \(p\mid t+2\). Therefore all prime
exponents in the factorization of \(t+2\) are divisible by \(b>1\), so
\(t+2\) is a nontrivial perfect power.

The same argument applies to \(t+1\). Indeed, if \(p\mid t+1\), then
\(p\nmid t+2\), and \eqref{eq:prime-comparison} gives
\[
  (t+1)v_p(t+1)=(y+t-2)v_p(y).
\]
Thus, we have
\[
  v_p(y)=\frac{t+1}{y+t-2}\,v_p(t+1).
\]
Since \(y\ge 4\), we have
\[
  0<\frac{t+1}{y+t-2}<1.
\]
Writing this fraction in lowest terms, its denominator is greater than \(1\),
and the same divisibility argument shows that every prime exponent in
\(t+1\) is divisible by this denominator. Hence \(t+1\) is also a nontrivial
perfect power.

Thus the consecutive integers \(t+1\) and \(t+2\) are both nontrivial perfect
powers. By Mih\v{a}ilescu's theorem \cite{Mihailescu2004}, the only such pair is
\(8\) and \(9\). Therefore,  $t+1=8$ and $t+2=9$, so \(t=7\). Substituting this into \eqref{eq:prime-comparison} yields
$9^{y-2}8^8=y^{y+5}$, or equivalently $3^{2(y-2)}2^{24}=y^{y+5}$. Comparing \(2\)-adic valuations gives $24=(y+5)v_2(y)$. In particular, \(y\) is even and \(y+5\mid 24\). Since \(y>2\), this forces $y\in\{7,19\}$. Both values are prime, contradicting the fact that \(y\) is composite. This
contradiction proves that \(\kappa_t\notin\mathbb Z\).

\section*{Declaration of competing interests}
The authors declare no competing interests.

\section*{Data availability}
No data were used for the research described in this article.

\section*{Declaration of AI usage}
The authors acknowledge the use of AI tools during the exploratory stage of this project. All
mathematical arguments and proofs in the final manuscript were checked and written by the
authors.

\section*{Acknowledgements}
Lihua Feng is supported by
 National Natural Science Foundation of China (Nos. 12271527 and 12471022). Lu Lu is supported by National Natural Science Foundation of China (No. 12371362).
The authors contributed equally to this work.

\end{document}